\documentclass[a4paper,11pt,reqno]{amsart}
\usepackage[utf8]{inputenc}
\usepackage{amsfonts}
\usepackage{amsmath}
\usepackage{amssymb}
\usepackage{amsthm}
\usepackage{verbatim}
\usepackage{color}
\usepackage{hyperref}
\usepackage{epstopdf}

\addtolength{\textwidth}{4 truecm}
\addtolength{\textheight}{1 truecm}
\setlength{\voffset}{-0.6 truecm}
\setlength{\hoffset}{-2 truecm}

\usepackage{thmtools}
\usepackage{theoremref}
\theoremstyle{plain}
\newtheorem{theorem}{Theorem}[section]
\newtheorem{lemma}[theorem]{Lemma}
\newtheorem{corollary}[theorem]{Corollary}

\theoremstyle{definition}
\newtheorem{definition}[theorem]{Definition}

\numberwithin{equation}{section}

\title[Twisted Moments of Dirichlet $L$-functions]{A Note On Twisted Moments of Dirichlet $L$-functions}

\author{J. C. Andrade}
\address{Department of Mathematics, University of Exeter, Exeter, EX4 4QF, United Kingdom}
\email{j.c.andrade@exeter.ac.uk}

\author{J. MacMillan}
\address{Department of Mathematics, University of Exeter, Exeter, EX4 4QF, United Kingdom}
\email{jm1015@exeter.ac.uk}

\date{\today}

\subjclass[2010]{Primary 11M38; Secondary 11M06, 11T06, 11T55}
\keywords{twisted moments, Dirichlet $L$-functions, function fields}

\begin{document}

\begin{abstract}
   In this paper, we establish an asymptotic formula for the twisted second moments of Dirichlet $L$-functions with one and two twists when averaged over all primitive Dirichlet characters of modulus $R$, where $R$ is a monic polynomial in $\mathbb{F}_q[T]$. The main result in this paper generalizes the work of Djankovi\'c [`The reciprocity law for the twisted second moment of Dirichlet $L$-functions over rational function fields', Bull. Aust. Math. Soc. \textbf{98} (2018), no. 3, 382--388]. 
\end{abstract}

\maketitle

\section{Introduction}

It is well-known that the study of moments of the Riemann zeta-function and $L$-functions is an important 
topic in analytic number theory. It can be even argued that great part of 
research in analytic number theory in the last century have been guided and motivated 
by this topic.

Applications of moments of $L$-functions appears more notably in 
the Lindel\"{o}f hypothesis, but also when studying proportions of zeros satisfying the
Riemann hypothesis and nonvanishing at the central point of families of $L$-functions. For some 
these applications is important to understand not only the moments of $L$-functions but also
what is known as \textit{twisted moments}. 

Let $\chi$ be a Dirichlet character modulo $p$, where $p$ is a prime number. The problem 
is then to obtain a formula for 
\begin{equation}\label{eq:S}
\mathcal{S}(p,h):=\sideset{}{^*}\sum_{\chi(\bmod p)}\left|L\left(\frac{1}{2},\chi\right)\right|^{2}\chi(h),
\end{equation}
where $h$ is a fixed prime number and the $\ast$ indicates a summation over all primitive Dirichlet characters modulo $p$. 
With this notation, Conrey \cite[Theorem 10]{Con} proved the following.

\begin{theorem}[Conrey \cite{Con}]
For primes $p,h$ with $2\leq h<p$ we have that
\begin{align*}
\mathcal{S}(p,h)&=\frac{p^{1/2}}{h^{1/2}}\mathcal{S}(h,-p)+\frac{p}{h^{1/2}}\left(\log\frac{p}{h}+\gamma-\log(8\pi)\right)+\zeta\left(\frac{1}{2}\right)^{2}p^{1/2}\\
&+O\left(h+\log p+\frac{p^{1/2}}{h^{1/2}}\log p\right),
\end{align*}
where $\gamma$ is Euler's constant and $\zeta$ is the Riemann zeta-function.
\end{theorem}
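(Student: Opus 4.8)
The plan is to open the square by an approximate functional equation, collapse the character average by orthogonality, peel off the diagonal to produce the explicit logarithmic main term, and transform the genuine off-diagonal by an arithmetic reciprocity that regenerates a twisted second moment to the modulus $h$. First I would record the symmetric approximate functional equation: for a primitive $\chi\bmod p$ with $\chi(-1)=\pm1$,
$$\left|L\left(\tfrac12,\chi\right)\right|^{2}=2\sum_{m,n\ge1}\frac{\chi(m)\overline{\chi}(n)}{(mn)^{1/2}}\,V_{\pm}\!\left(\frac{mn}{p}\right),$$
where $V_{\pm}$ is smooth, bounded near $0$ and of rapid decay for $mn\gg p^{1+\varepsilon}$, and whose Mellin transform $\widetilde V_{\pm}(s)$ carries the archimedean $\Gamma$-factors with $\widetilde V_{\pm}(0)=1$. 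Substituting this into \eqref{eq:S}, interchanging the finite character sum with the sums over $m,n$, and splitting the primitive characters by parity, I would apply orthogonality modulo the prime $p$. Since every non-principal character mod $p$ is primitive, this replaces $\sideset{}{^*}\sum_{\chi}\chi(hm)\overline{\chi}(n)$ by $(p-1)\mathbf{1}_{hm\equiv\pm n\,(p)}$ together with a principal-character term of size $O(1)$ per pair $(m,n)$, the $\pm$ recording the even/odd split. At this stage $\mathcal S(p,h)$ has become a weighted count of solutions of $hm\equiv\pm n\pmod p$ with $mn\lesssim p^{1+\varepsilon}$.

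Next I would isolate the diagonal. The equality $n=hm$ contributes
$$\frac{2(p-1)}{h^{1/2}}\sum_{m\ge1}\frac1m\,V\!\left(\frac{hm^{2}}{p}\right)=\frac{2(p-1)}{h^{1/2}}\cdot\frac{1}{2\pi i}\int_{(c)}\widetilde V(s)\left(\frac{p}{h}\right)^{s}\zeta(1+2s)\,ds.$$
Moving the contour past $s=0$, the simple pole of $\zeta(1+2s)$ crossed with the $\Gamma$-factors in $\widetilde V$ produces a residue equal to $\tfrac12\big(\log(p/h)+\widetilde V'(0)\big)+\gamma$; using the standard evaluation $\widetilde V'(0)=-\gamma-\log(8\pi)$ and multiplying by $2(p-1)/h^{1/2}$ yields the main term $\frac{p}{h^{1/2}}\big(\log(p/h)+\gamma-\log(8\pi)\big)$. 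Before going further I would observe that the ``total mass'' part of the congruence sum—the $a=0$ frequency when $\mathbf{1}_{hm\equiv n\,(p)}$ is detected additively—reproduces $2\sum_{m,n}(mn)^{-1/2}V(mn/p)$, which cancels the principal-character term from orthogonality up to negligible error; this is important, since each of these pieces is individually of size $\asymp p^{1/2}\log p$ through the double pole of $\zeta(1/2+s)^{2}$ at $s=\tfrac12$, and only their difference is small.

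The crux is the off-diagonal $hm\equiv\pm n\pmod p$ with $hm\neq n$. Here I would detect the congruence with additive characters, $\mathbf{1}_{hm\equiv n\,(p)}=\tfrac1p\sum_{a\,(p)}e\big(a(hm-n)/p\big)$, and evaluate the resulting $m$- and $n$-sums by Mellin inversion, turning them into Gauss sums and values of Hurwitz-type zeta functions near $\tfrac12$. The decisive input is the reciprocity $\frac{\overline h}{p}\equiv-\frac{\overline p}{h}+\frac{1}{hp}\pmod 1$ for $(h,p)=1$, which rewrites every modulus-$p$ phase in terms of the modulus $h$; after the functional equation of the Estermann double Dirichlet series swaps the two indices, the weight rescales by $(p/h)^{1/2}$ and the modulus-$p$ twist by $h$ becomes a modulus-$h$ twist by $-p$, assembling precisely into $\frac{p^{1/2}}{h^{1/2}}\,\mathcal S(h,-p)$. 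The same transformation carries a boundary contribution from the central point, namely the value of the accompanying $\zeta(1/2+s)^{2}$ factor at $s=0$, which is exactly the remaining explicit term $\zeta(\tfrac12)^{2}p^{1/2}$. Identifying this new twisted moment cleanly—getting the sign right so that the twist is $-p$ rather than $+p$, and separating the two explicit summands from the genuinely smaller remainder—is where I expect the real work to lie.

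Finally I would collect the errors. These come from the principal-character term (size $O(1)$ per retained $(m,n)$, hence $O(h)$ after the diagonal count), from the tails of $V_{\pm}$ beyond $mn\asymp p^{1+\varepsilon}$, from the non-polar part of each Mellin integral, and from the non-principal frequencies $a\neq0$ surviving the reciprocity, which I would bound using the Weil/convexity estimates for the Gauss sums together with trivial bounds for the short sums. Balancing these contributions should produce the stated remainder $O\big(h+\log p+(p/h)^{1/2}\log p\big)$, completing the reciprocity formula.
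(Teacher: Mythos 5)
A point of orientation first: this statement is quoted in the paper as background (it is Conrey's Theorem 10, cited from \cite{Con}) and is not proved anywhere in the text, so there is no internal proof to measure your argument against. The results the paper actually proves, \thref{onetwist} and \thref{twotwist}, deliberately avoid the reciprocity mechanism you describe: in the function-field setting the off-diagonal is simply \emph{bounded} by counting solutions of $AH\equiv B\pmod F$ (\thref{errorterm1}, \thref{errorterm2}), which suffices because those theorems are asymptotic formulas with an error term rather than exact reciprocity identities.

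Judged as a proof of Conrey's theorem itself, your outline follows the historically correct route: approximate functional equation, orthogonality over the prime modulus, diagonal evaluation by a contour shift past the pole of $\zeta(1+2s)$, and treatment of the off-diagonal via additive characters, the Estermann zeta function, and the fraction reciprocity $\overline h/p\equiv-\overline p/h+1/(hp)\pmod 1$. You also correctly flag the delicate cancellation between the principal-character contribution and the zero frequency, each individually of size $p^{1/2}\log p$. But the decisive step is only announced, not carried out: you never apply the Estermann functional equation, evaluate the resulting Gauss sums, track the effect of the $1/(hp)$ correction in the reciprocity, or show how the transformed sum reassembles into a character sum modulo $h$ equal to $\frac{p^{1/2}}{h^{1/2}}\mathcal S(h,-p)$ plus the boundary term $\zeta(\tfrac12)^2p^{1/2}$ --- you yourself say this is ``where the real work lies''. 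Since the entire content of the theorem is precisely this reassembly (everything else is routine second-moment technology), what you have is a correct plan rather than a proof. The error analysis would also need to be done in earnest: in particular the $O(h)$ term cannot come from the principal character as you suggest, since that contribution is of size $p^{1/2}\log p$ before the cancellation you invoke; it arises in the off-diagonal/boundary bookkeeping.
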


In \cite{Young2011a}, Young extended Conrey's result as follows.

\begin{theorem}[Young \cite{Young2011a}]
For primes $p,h$ with $h<p^{1-\varepsilon}$, we have that
\begin{align*}
\frac{p^{1/2}}{\varphi(p)}&\mathcal{S}(p,h)-\frac{h^{1/2}}{\varphi(h)}\mathcal{S}(h,-p)=\frac{p^{1/2}}{h^{1/2}}\left(\log\frac{p}{h}+\gamma-\log(8\pi)\right)\\
&+\zeta\left(\frac{1}{2}\right)^{2}\left(1-2\frac{p^{1/2}}{\varphi(p)}(1-p^{-1/2})+2\frac{h^{1/2}}{\varphi(h)}(1-h^{-1/2})\right)+\mathcal{E}(p,h),
\end{align*}
where $\varphi(p)$ is Euler's totient function and
$$\mathcal{E}(p,h)\ll hp^{-1-\varepsilon}+h^{-C},$$
for all fixed $\varepsilon,C>0$.
\end{theorem}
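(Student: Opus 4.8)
The plan is to begin from an approximate functional equation for $|L(\tfrac12,\chi)|^2 = L(\tfrac12,\chi)L(\tfrac12,\overline{\chi})$, valid for primitive $\chi \bmod p$, which expresses this quantity as
$$|L(\tfrac12,\chi)|^2 = 2\sum_{m,n\ge 1}\frac{\chi(m)\overline{\chi}(n)}{(mn)^{1/2}}\,V_{\pm}\!\left(\frac{mn}{p}\right),$$
where $V_{\pm}$ is a smooth, rapidly decaying weight coming from the ratio of gamma factors, the sign depending on whether $\chi$ is even or odd. I would split $\mathcal{S}(p,h)$ according to parity, insert this expansion, multiply by the twist $\chi(h)$ so that the numerator becomes $\chi(hm)\overline{\chi}(n)$, and interchange summations so that the character sum $\sideset{}{^*}\sum_{\chi}\chi(hm)\overline{\chi}(n)$ sits on the inside.

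Next I would invoke orthogonality for primitive characters modulo the prime $p$. Since every non-principal character mod $p$ is automatically primitive, this sum equals $\varphi(p)$ times the indicator of $hm\equiv \pm n \pmod p$ (the $\pm$ arising from folding even and odd characters) minus the contribution of the principal character. The principal-character term must be subtracted off and is exactly what produces the $\zeta(\tfrac12)^2$ pieces in the final formula — in particular the factors $2\frac{p^{1/2}}{\varphi(p)}(1-p^{-1/2})$ and $2\frac{h^{1/2}}{\varphi(h)}(1-h^{-1/2})$ are the normalized principal-character removals for the $p$- and $h$-moments respectively. This reduces everything to a weighted count of pairs $(m,n)$ with $hm\equiv \pm n \pmod p$.

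The evaluation then separates into a diagonal and an off-diagonal. The diagonal $hm = n$ collapses to a single sum $\sum_m (hm^2)^{-1/2}V_{\pm}(\cdots)$; a Mellin representation of $V_{\pm}$ and a contour shift past the pole at $s=\tfrac12$ then yield the main term $\frac{p^{1/2}}{h^{1/2}}\big(\log\frac{p}{h}+\gamma-\log 8\pi\big)$, with the residual constant $\zeta(\tfrac12)^2$ emerging from the lower-order part of the shifted integral. For the off-diagonal, with $hm\equiv\pm n \pmod p$ but $hm\neq\pm n$, I would detect the congruence through additive characters $\frac1p\sum_{a\bmod p}e\!\left(\frac{a(hm\mp n)}{p}\right)$ and apply Poisson summation in $m$ and $n$; the resulting Gauss factors convert the modulus-$p$ twist into a modulus-$h$ twist carrying the argument $-p$, so that the off-diagonal reassembles, after exchanging the roles of the two moduli, into precisely $\frac{h^{1/2}}{\varphi(h)}\mathcal{S}(h,-p)$ — which is why that term appears on the left of the identity.

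The hard part will be this off-diagonal reciprocity step: one must show the transformed sum is \emph{exactly} the reciprocal moment, with a genuine error of size $O(hp^{-1-\varepsilon}+h^{-C})$ rather than merely of comparable magnitude. This requires tracking the weights $V_{\pm}$ faithfully through the Poisson/Voronoi transformation, controlling the completed dual sums and their Gauss factors uniformly, and truncating the dual variables cleanly using the rapid decay of $V_{\pm}$ together with the hypothesis $h<p^{1-\varepsilon}$. Young's improvement over Conrey's formula — the sharper error term and the explicit secondary constant built from $\zeta(\tfrac12)^2$ — comes exactly from carrying out this transformation \emph{exactly} and retaining the lower-order contributions that the coarser treatment would otherwise discard into the error.
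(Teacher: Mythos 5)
This statement is quoted in the paper as a known result of Young (Theorem~1.2, citing \cite{Young2011a}); the paper itself offers no proof of it, so there is no internal argument to compare yours against. Judged on its own terms, your outline has the right skeleton --- approximate functional equation for $|L(\tfrac12,\chi)|^2$, orthogonality over the primitive characters modulo the prime $p$ with the principal character subtracted off (correctly identified as the source of the $\zeta(\tfrac12)^2$ corrections), a diagonal term $hm=n$ giving the main term $\frac{p^{1/2}}{h^{1/2}}(\log\frac{p}{h}+\gamma-\log 8\pi)$, and an off-diagonal that must reassemble into $\frac{h^{1/2}}{\varphi(h)}\mathcal{S}(h,-p)$. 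That is indeed the architecture of Young's proof and of the function-field analogues in this paper.

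However, the mechanism you propose for the off-diagonal is not the one Young uses, and it is also where your proposal stops being a proof. Young's reciprocity step is elementary: writing the off-diagonal condition as $hm \mp n = rp$ with $r \neq 0$, one reduces modulo $h$ to get $rp \equiv \mp n \pmod{h}$, and then detects \emph{this} congruence by orthogonality of multiplicative characters modulo $h$, which directly reassembles the sum into the twisted moment $\mathcal{S}(h,-p)$ --- no additive characters, Poisson summation, or Gauss sums are needed to produce the reciprocal term. Your route via $\frac1p\sum_{a}e(a(hm\mp n)/p)$ and Poisson/Voronoi is closer to Conrey's original Estermann-function treatment (and to Bettin's later refinement); it can be made to work, but it is the harder path, and the claim that the Gauss factors ``convert the modulus-$p$ twist into a modulus-$h$ twist carrying the argument $-p$'' is precisely the assertion that needs proof. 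As you yourself concede, you have not carried out this transformation, nor the weight-tracking and truncation needed to land on the error term $O(hp^{-1-\varepsilon}+h^{-C})$, so the proposal is an accurate plan of attack rather than a proof; the decisive identity is asserted, not established.
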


Advancing the study of twisted moments of Dirichlet $L$-functions, Bettin \cite{Bettin2016} showed that the error term $\mathcal{E}(p,h)$ can be extended to a continuous function with respect to the real topology. In his work, Bettin extended the known reciprocity results for twisted moments by establishing an exact formula with shifts.

Another related problem, that can be seen as a generalization of \eqref{eq:S}, is to obtain asymptotic formulas for the twisted moments of Dirichlet $L$-functions with \textit{two} twists. In other words, the aim is to study
\begin{equation}\label{eq:S2}
M_{\pm}(h,k;p):=\frac{p^{1/2}}{\varphi(p)}\sideset{}{^*}\sum_{\substack{\chi(\bmod p) \\ \chi(-1)=\pm1}}\left|L\left(\frac{1}{2},\chi\right)\right|^{2}\chi(h)\overline{\chi}(k),
\end{equation} 
where $h,p$ and $k$ are prime numbers. The quantity \eqref{eq:S2} was first studied by Selberg \cite{Sel}. In \cite{Bettin2016}, Bettin improved Selberg's result on the second moment of Dirichlet $L$-functions with two twists.

\begin{theorem}[Bettin \cite{Bettin2016}]
Let $h,k$ and $p$ be distinct prime numbers such that $p\geq4hk$. Then
\begin{align*}
M_{\pm}(h,k;p)=&\pm M_{\pm}(h,p;k)\pm M_{\pm}(k,p;h)\\
&+\frac{1}{2}\left(\frac{p}{hk}\right)^{1/2}\left(\log\frac{p}{hk}+\gamma-\log(8\pi)\mp\frac{\pi}{2}\right)+O(\log p).
\end{align*}
\end{theorem}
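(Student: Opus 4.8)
The plan is to start from an approximate functional equation for the central value $|L(1/2,\chi)|^2$. For a primitive character $\chi$ modulo the prime $p$ of a fixed parity $\chi(-1)=\pm 1$, one writes
$$
\left|L\left(\tfrac12,\chi\right)\right|^2 = 2\sum_{m,n\geq 1}\frac{\chi(m)\overline{\chi}(n)}{(mn)^{1/2}}\,V_{\pm}\!\left(\frac{mn}{p}\right),
$$
where $V_\pm$ is a smooth, rapidly decaying weight built from the ratio of gamma factors in the functional equation, depending on the parity through the archimedean factor. Inserting this into the definition \eqref{eq:S2} of $M_\pm(h,k;p)$ reduces the problem to evaluating, for each pair $(m,n)$, the twisted character average
$$
\frac{1}{2}\sideset{}{^*}\sum_{\substack{\chi(\bmod p)\\ \chi(-1)=\pm1}}\chi(hm)\overline{\chi}(kn).
$$
First I would carry out the sum over characters. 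Writing the parity projector $\tfrac12(1\pm\chi(-1))$ and invoking orthogonality of characters modulo $p$, this average localizes onto the two congruences $hm\equiv kn\pmod p$ and $hm\equiv -kn\pmod p$, the sign being tied to the parity. Separating off the principal-character contribution (a lower-order term to be absorbed into the error) leaves a diagonal part, where $hm=kn$ exactly, and an off-diagonal part, where $hm\mp kn=\ell p$ for some nonzero integer $\ell$.

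Next I would evaluate the diagonal. Since $h,k$ are distinct primes, $hm=kn$ forces $m=k\ell$ and $n=h\ell$ for $\ell\geq1$, so the diagonal collapses to
$$
\frac{p^{1/2}}{(hk)^{1/2}}\sum_{\ell\geq1}\frac{1}{\ell}\,V_{\pm}\!\left(\frac{hk\,\ell^2}{p}\right).
$$
A Mellin-transform evaluation of this weighted sum, using the Laurent expansion of $\zeta(s)$ at the edge together with the explicit value and derivative of the Mellin transform of $V_\pm$ (the latter contributing the constants $\gamma-\log(8\pi)$ and the parity term $\mp\pi/2$), produces exactly the main term
$$
\frac{1}{2}\left(\frac{p}{hk}\right)^{1/2}\left(\log\frac{p}{hk}+\gamma-\log(8\pi)\mp\frac{\pi}{2}\right).
$$

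The hard part is the off-diagonal contribution, and this is where the reciprocity phenomenon originates. In the off-diagonal one has a constraint $hm\mp kn=\ell p$ with $\ell\neq0$; the hypothesis $p\geq4hk$ guarantees that in the essential ranges of $m,n$ cut out by $V_\pm$ the integer $\ell$ is small and the solutions are well separated. The strategy is to reinterpret this constraint by reducing it modulo $k$ (respectively modulo $h$): the relation $hm\equiv \ell p\pmod k$ allows one to detect the off-diagonal by introducing characters modulo $k$ (respectively modulo $h$), effectively swapping the role of the modulus $p$ with that of a twist. After applying the relevant functional equation — equivalently, the reciprocity formula for the associated Estermann/divisor-type exponential sum — the two off-diagonal families reassemble precisely into the twisted moments $\pm M_\pm(h,p;k)$ and $\pm M_\pm(k,p;h)$, with the global sign dictated by the parity. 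I expect the bookkeeping here to be the crux: one must match weight functions across the functional equation, control the dependence on $\ell$, and show that everything left over — tails of the weights, the principal-character contribution, and the error incurred in the approximate functional equation — is uniformly $O(\log p)$.
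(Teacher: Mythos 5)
This statement is quoted in the paper as background (Theorem 1.3, attributed to Bettin \cite{Bettin2016}); the paper itself contains no proof of it, so there is nothing internal to compare your argument against. Judged on its own terms, your outline follows the standard route (and essentially Bettin's actual one): approximate functional equation with a parity-dependent weight, orthogonality with the projector $\tfrac12(1\pm\chi(-1))$, a diagonal term $hm=kn$ collapsing to $m=k\ell$, $n=h\ell$ and yielding the main term $\tfrac12(p/hk)^{1/2}(\log(p/hk)+\gamma-\log(8\pi)\mp\tfrac{\pi}{2})$ by Mellin inversion. That part is sound.

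The genuine gap is that the entire content of the theorem --- the reciprocity --- lives in the off-diagonal, and there you have described an intention rather than an argument. Saying that the constraint $hm\mp kn=\ell p$ can be ``reinterpreted modulo $k$'' and that the two families ``reassemble precisely into $\pm M_\pm(h,p;k)$ and $\pm M_\pm(k,p;h)$'' skips the mechanism that makes this work: in Bettin's treatment one expresses the off-diagonal through the Estermann function $D\bigl(s,\tfrac{a}{c}\bigr)=\sum_{n\ge1}\sigma_0(n)e(na/c)n^{-s}$ at the central point, applies its functional equation (which replaces $a/c$ by $\pm\overline{a}/c$), and then uses the elementary reciprocity $\tfrac{\overline{a}}{c}+\tfrac{\overline{c}}{a}\equiv\tfrac{1}{ac}\pmod 1$ to trade the denominator $p$ for the denominators $h$ and $k$; it is this last step, together with a careful matching of the archimedean weights on both sides, that produces the moments with modulus $h$ and $k$ and fixes the signs. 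Without carrying this out (and without verifying that the leftover terms --- the $\tfrac{1}{ac}$ correction, the principal-character contribution, and the weight mismatches --- are $O(\log p)$ uniformly under $p\ge 4hk$), the proposal does not yet establish the identity; it only explains why one should expect a reciprocity formula of this shape.
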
 

More recently, there have been some interesting developments on the study of twisted second moments of Dirichlet $L$-functions over rational function fields. Let $q$ be a power of an odd prime number and $\mathbb{A}=\mathbb{F}_{q}[T]$ the polynomials with coefficients in the finite field $\mathbb{F}_{q}$. In this setting, Djankovi\'{c} \cite{Djankovic2018} proved the following.

\begin{theorem}[Djankovi\'{c} \cite{Djankovic2018}]
Let $P,H$ be irreducible polynomials in $\mathbb{F}_{q}[T]$ and
$$\mathcal{S}(P,H):=\sideset{}{^*}\sum_{\chi(\bmod P)}\left|L\left(\frac{1}{2},\chi\right)\right|^{2}\chi(H).$$
If $H\neq P$ and $\deg(H)\leq\deg(P)$ then
\begin{align*}
\frac{|P|^{1/2}}{\phi(P)}&\mathcal{S}(P,H)-\frac{|H|^{1/2}}{\phi(H)}\mathcal{S}(H,-P)=\frac{|P|^{1/2}}{|H|^{1/2}}\left(\deg(P)-\deg(H)-\zeta_{\mathbb{A}}\left(\frac{1}{2}\right)^{2}\right)\\
&+\zeta_{\mathbb{A}}\left(\frac{1}{2}\right)^{2}\left(1-2\frac{|P|^{1/2}}{\phi(P)}(1-|P|^{-1/2})+2\frac{|H|^{1/2}}{\phi(H)}(1-|H|^{-1/2})\right),
\end{align*}
where $L(s,\chi)$ is the Dirichlet $L$-function in function fields associated to the Dirichlet character $\chi$ modulo $P$, with $\zeta_{\mathbb{A}}(s)$ being the zeta-function for $\mathbb{F}_{q}[T]$, $\phi(P)$ the Euler's totient function for polynomials and $|P|=q^{\deg(P)}$ denotes the norm of a polynomial $P$ in $\mathbb{F}_{q}[T]$.
\end{theorem}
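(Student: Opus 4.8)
The plan is to exploit the fact that over $\mathbb{F}_q[T]$ the $L$-function of a primitive character $\chi$ modulo the irreducible polynomial $P$ is a polynomial of degree $\deg(P)-1$ in $q^{-s}$, because $\sum_{\deg f = n}\chi(f)=0$ once $n\geq\deg P$. Thus $L(1/2,\chi)=\sum_{\deg f<\deg P}\chi(f)|f|^{-1/2}$ (sum over monic $f$), and since $|f|^{-1/2}$ is real, $\bigl|L(1/2,\chi)\bigr|^2=\sum_{\deg f,\deg g<\deg P}\chi(f)\overline{\chi}(g)|f|^{-1/2}|g|^{-1/2}$. Inserting this into $\mathcal{S}(P,H)$ and interchanging summation, the inner sum is $\sideset{}{^*}\sum_{\chi}\chi(Hf)\overline{\chi}(g)$. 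As $P$ is irreducible, every non-principal character modulo $P$ is primitive, so by orthogonality this sum equals $\phi(P)-1$ if $Hf\equiv g\pmod P$ and $-1$ otherwise (the coprimality to $P$ being automatic since $\deg f,\deg g<\deg P$). First I would therefore record $\mathcal{S}(P,H)=\phi(P)D(P,H)-E(P)$, where $D(P,H)=\sum|f|^{-1/2}|g|^{-1/2}$ runs over monic $f,g$ of degree $<\deg P$ with $Hf\equiv g\pmod P$, and $E(P)=\sum|f|^{-1/2}|g|^{-1/2}$ runs over all such $f,g$.

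The two routine ingredients come next. The correction $E(P)=\bigl(\sum_{\deg f<\deg P}|f|^{-1/2}\bigr)^2$ is a squared geometric series which, using $\zeta_{\mathbb{A}}(1/2)=(1-q^{1/2})^{-1}$, evaluates to $(1-|P|^{1/2})^2\zeta_{\mathbb{A}}(1/2)^2$. Inside $D(P,H)$ I would isolate the genuinely diagonal part $D_0$ coming from $Hf=g$ with no reduction modulo $P$; this forces $\deg f<\deg P-\deg H$ and $g=Hf$, so that $D_0=|H|^{-1/2}(\deg P-\deg H)$, producing the main term $\tfrac{|P|^{1/2}}{|H|^{1/2}}(\deg P-\deg H)$ after multiplication by $|P|^{1/2}$.

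The heart of the argument, and the step I expect to be the main obstacle, is the off-diagonal part $D_{\neq0}(P,H)$ where $Hf\equiv g\pmod P$ but $Hf\neq g$. Here division with remainder gives $Hf=Pk+g$ with $k,g$ monic, $\deg k<\deg H$ and $\deg g<\deg P$, and the map $f\mapsto(k,g)$ is a bijection onto the pairs satisfying $g\equiv -Pk\pmod H$, inverted by $f=(Pk+g)/H$. A short degree count gives $|f|^{-1/2}=(|H|/|P|)^{1/2}|k|^{-1/2}$, so that $|P|^{1/2}D_{\neq0}(P,H)=|H|^{1/2}\sum|k|^{-1/2}|g|^{-1/2}$ over monic $k,g$ with $\deg k<\deg H$, $\deg g<\deg P$ and $g\equiv -Pk\pmod H$. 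The contribution of the range $\deg g<\deg H$ is \emph{exactly} $|H|^{1/2}D(H,-P)$, which is where the reciprocal moment $\mathcal{S}(H,-P)$ enters; the remaining range $\deg H\le\deg g<\deg P$ is evaluated by noting that each residue class modulo $H$ contains precisely $q^{\deg g-\deg H}$ monic polynomials of degree $\deg g$, yielding a geometric series independent of $k$. Correctly setting up this quotient/remainder bijection and tracking the monic conditions throughout is the delicate point; the outcome is the clean reciprocity relation $|P|^{1/2}D_{\neq0}(P,H)-|H|^{1/2}D(H,-P)=\bigl(1-\tfrac{|P|^{1/2}}{|H|^{1/2}}-|H|^{1/2}+|P|^{1/2}\bigr)\zeta_{\mathbb{A}}(1/2)^2$.

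Finally I would assemble the two sides through $\tfrac{|P|^{1/2}}{\phi(P)}\mathcal{S}(P,H)-\tfrac{|H|^{1/2}}{\phi(H)}\mathcal{S}(H,-P)=|P|^{1/2}D(P,H)-|H|^{1/2}D(H,-P)-\tfrac{|P|^{1/2}}{\phi(P)}E(P)+\tfrac{|H|^{1/2}}{\phi(H)}E(H)$. Writing $D(P,H)=D_0+D_{\neq0}(P,H)$ and inserting the reciprocity relation, the term $|H|^{1/2}D(H,-P)$ cancels, leaving the main term $\tfrac{|P|^{1/2}}{|H|^{1/2}}(\deg P-\deg H)$ together with the explicit geometric-series tail and the corrections $E(P)$ and $E(H)$, each of the last three being a multiple of $\zeta_{\mathbb{A}}(1/2)^2$. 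Using $\phi(P)=|P|-1$ and $\phi(H)=|H|-1$, routine algebra then collapses these into the $-\tfrac{|P|^{1/2}}{|H|^{1/2}}\zeta_{\mathbb{A}}(1/2)^2$ correction to the main term together with the bracket $\zeta_{\mathbb{A}}(1/2)^2\bigl(1-2\tfrac{|P|^{1/2}}{\phi(P)}(1-|P|^{-1/2})+2\tfrac{|H|^{1/2}}{\phi(H)}(1-|H|^{-1/2})\bigr)$, which is the claimed identity. The boundary case $\deg H=\deg P$ is the degenerate one in which both $D_0$ and the tail vanish, and it is consistent with the stated formula.
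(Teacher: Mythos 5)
Your proposal is correct; I checked the key computations. The bijection $Hf=Pk+g$ with $k$ monic of degree $\deg(f)+\deg(H)-\deg(P)<\deg(H)$ is valid (in the off-diagonal case $\deg(Hf)\geq\deg(P)$ is forced, so $k$ is indeed monic), the tail over $\deg(H)\leq\deg(g)<\deg(P)$ evaluates to $\bigl(1+|P|^{1/2}-|H|^{1/2}-|P|^{1/2}|H|^{-1/2}\bigr)\zeta_{\mathbb{A}}\bigl(\tfrac12\bigr)^2$ as you claim, and the final bookkeeping with $E(P)=(1-|P|^{1/2})^2\zeta_{\mathbb{A}}\bigl(\tfrac12\bigr)^2$ and $\phi(P)=|P|-1$ does collapse to the stated bracket (the identity $b-\tfrac{b(b-1)}{b+1}=\tfrac{2b}{b+1}=2-\tfrac{2}{b+1}$ with $b=|P|^{1/2}$ is what turns your form into the one printed in the theorem). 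One point of comparison: this statement is quoted in the paper from Djankovi\'c without proof, so there is no in-paper argument to measure you against; what you have written is essentially a self-contained reconstruction of Djankovi\'c's reciprocity proof. It shares its opening moves with the paper's proofs of its own Theorems 1.6 and 1.7 --- expand $|L(1/2,\chi)|^2$, apply orthogonality, split into diagonal and off-diagonal --- but diverges where it matters: for prime modulus every nontrivial character is primitive, so orthogonality is exact and the off-diagonal sum can be evaluated identically via your quotient-and-remainder bijection, producing the reciprocal moment $\mathcal{S}(H,-P)$ and an exact formula; for general modulus $R$ the paper instead bounds the off-diagonal contribution (Lemmas 3.3--3.5) and settles for an asymptotic with error $O\bigl(|H|^{1/2}\log\omega(R)\bigr)$. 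Two small presentational nits: the map you call $f\mapsto(k,g)$ is really $(f,g)\mapsto(k,g)$ with $g$ carried along, and it is worth saying explicitly that for non-principal $\chi$ one has $\sum_{\deg f=n}\chi(f)=0$ for $n\geq\deg(P)$ because monic polynomials of degree $n$ cover each residue class modulo $P$ exactly $q^{n-\deg(P)}$ times.
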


In a recent work, Djankovi\'{c}, {\DJ}oki\'c and Lelas \cite{Djankovic2021a} have established a function field analogue of Bettin's result about twisted second moments of Dirichlet $L$-functions with two twists. If we let $H, K$ and $Q$ be monic irreducible polynomials in $\mathbb{F}_q[T]$ and restrict the sum further to be over all even or odd Dirichlet characters modulo $Q$ then the problem is to establish an asymptotic formula for 
\begin{equation}\label{secondmonenttwotwist}
\mathcal{S}^{\pm}(Q;H,K)=\frac{|Q|^{\frac{1}{2}}}{\phi^{\pm}(Q)}\sideset{}{^{\pm}}\sum_{\substack{\chi (\text{mod }Q)\\\chi\neq\chi_0}}\left|L\left(\frac{1}{2},\chi\right)\right|^2\chi(H)\bar{\chi}(K),
\end{equation}
where $\phi^{\pm}(Q)$ denotes the number of even or odd Dirichlet characters modulo $Q$. Motivated by the methods of Bettin \cite{Bettin2016}, Djankovi\'c, \DJ oki\'c and Lelas \cite{Djankovic2021a} established a triple reciprocity formula involving $\mathcal{S}^{-}(Q;H,K)$, $\mathcal{S}^{-}(H;K,-Q)$ and $\mathcal{S}^{-}(K;H,-Q)$ and involving $\mathcal{S}^+(Q;H,K)$, $\mathcal{S}^+(H;K,Q)$ and $\mathcal{S}^+(K;H,Q)$. In particular, they proved the following results.

\begin{theorem}[Djankovi\'c, \DJ oki\'c and Lelas \cite{Djankovic2021a}]\label{Ch9primetwotwist}
Let $H$, $K$ and $Q$ be distinct monic irreducible polynomials in $\mathbb{F}_q[T]$ such that $\deg(H)+\deg(K)\leq \deg(Q)$. Then we have the following triple reciprocity formulas:
\begin{align*}
\mathcal{S}^{-}(Q;H,K)&=\mathcal{S}^{-}(H;K,-Q)+\mathcal{S}^{-}(K;H,-Q)\\
&+\frac{|Q|^{\frac{1}{2}}}{|HK|^{\frac{1}{2}}}\left(\deg(Q)-\deg(H)-\deg(K)\right)
\end{align*}
and
\begin{align*}
\mathcal{S}^+(Q;H,K)&=\mathcal{S}^+(H;K,Q)+\mathcal{S}^+(K;H,Q)\\
&+\frac{|Q|^{\frac{1}{2}}}{|HK|^{\frac{1}{2}}}\left(\deg(Q)-\deg(H)-\deg(K)-\zeta_{\mathbb{A}}\left(\frac{1}{2}\right)^2(q-1)\right)\\
&-2\zeta_{\mathbb{A}}\left(\frac{1}{2}\right)^2\left(\frac{|Q|^{\frac{1}{2}}-1}{\phi^+(Q)}-\frac{|H|^{\frac{1}{2}}-1}{\phi^+(H)}-\frac{|K|^{\frac{1}{2}}-1}{\phi^+(K)}\right).
\end{align*}
\end{theorem}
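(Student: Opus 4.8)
The plan is to adapt Bettin's method \cite{Bettin2016} to the rational function field, in the spirit of Djankovi\'c's treatment of the single-twist case \cite{Djankovic2018}. Writing $d=\deg(Q)$ and $u=q^{-s}$, I would start from the fact that for a non-principal character $\chi\bmod Q$ the $L$-function $L(s,\chi)$ is a polynomial in $u$: of degree $d-1$ when $\chi$ is odd, and of the form $(1-u)$ times a polynomial of degree $d-2$ when $\chi$ is even, the trivial zero at $u=1$ being the analytic manifestation of $\chi$ being trivial on $\mathbb{F}_q^{*}$. In both cases one has an exact expression
$$L\left(\tfrac12,\chi\right)=\sum_{\substack{A\ \mathrm{monic}\\ \deg A\leq d-1}}\frac{\chi(A)}{|A|^{1/2}},$$
and its functional equation lets one rewrite $\left|L(1/2,\chi)\right|^2=L(1/2,\chi)\overline{L(1/2,\chi)}$ in a symmetric shape. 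Expanding the product and inserting the twist $\chi(H)\overline{\chi}(K)$ produces a weighted double sum $\sum_{A,B}|AB|^{-1/2}\,\chi(AH)\overline{\chi}(BK)$ over monic $A,B$ of controlled degree.

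Next I would interchange the order of summation and average over the even, respectively odd, characters modulo $Q$. Using $\sum_{\chi\ \mathrm{even}}\chi(A)\overline{\chi}(B)=\phi^{+}(Q)$ precisely when $AH\equiv c\,BK\pmod Q$ for some $c\in\mathbb{F}_q^{*}$ (and $0$ otherwise, under coprimality to $Q$), and recovering the odd average as the full average minus the even one, the character sum collapses to a count of solutions of $AH\equiv \pm BK\pmod Q$ weighted by $|AB|^{-1/2}$. The sign here is dictated by the value of the characters on $-1\in\mathbb{F}_q^{*}$; this is precisely what separates the even from the odd case and what will turn the reciprocal modulus into $+Q$ in $\mathcal{S}^{+}(H;K,Q)$ but $-Q$ in $\mathcal{S}^{-}(H;K,-Q)$.

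I would then split this sum into a \emph{diagonal} part, where $AH=\pm BK$ holds as an identity of polynomials, and an \emph{off-diagonal} part, where $AH\mp BK=cQ$ with $c\neq0$. Since $H,K$ are distinct irreducibles with $\deg(H)+\deg(K)\leq\deg(Q)$, the diagonal forces $K\mid A$ and $H\mid B$; writing $A=KA'$, $B=HA'$ and summing the resulting series $|HK|^{-1/2}\sum_{A'}|A'|^{-1}$ over the admissible degrees yields the explicit term $\tfrac{|Q|^{1/2}}{|HK|^{1/2}}(\deg Q-\deg H-\deg K)$, the prefactor $|Q|^{1/2}/\phi^{\pm}(Q)$ cancelling against $\phi^{\pm}(Q)$ from orthogonality. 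In the even case the extra $\mathbb{F}_q^{*}$-multiples in the congruence and the $(1-u)$ trivial zero feed in the constant-field contribution that only even characters detect, producing the corrections with $\zeta_{\mathbb{A}}(1/2)^2$, the factor $(q-1)$, and the denominators $\phi^{+}(Q),\phi^{+}(H),\phi^{+}(K)$. For the off-diagonal part the key move is to reverse the roles of the moduli: applying the functional equation to one $L$-factor and reducing the relation $AH\mp BK=cQ$ modulo $H$ and modulo $K$ respectively, the off-diagonal reorganizes exactly into $\mathcal{S}^{\pm}(H;K,\pm Q)$ and $\mathcal{S}^{\pm}(K;H,\pm Q)$, which is the reciprocity.

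The principal obstacle, and where the real work concentrates, is the exact accounting of the off-diagonal together with the even/odd split: one must follow the root number of the functional equation and the precise effect of the $\mathbb{F}_q^{*}$-weighting so that the signs $\pm Q$, the factor $(q-1)$, and the three denominators $\phi^{+}(\cdot)$ come out with the correct coefficients, and so that the $\zeta_{\mathbb{A}}(1/2)^2$ corrections appear in the even formula while the odd formula stays clean. A secondary but necessary point is to check that the hypothesis $\deg(H)+\deg(K)\leq\deg(Q)$ cleanly separates diagonal from off-diagonal, leaving no boundary terms to reconcile.
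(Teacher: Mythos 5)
This theorem is not proved in the paper at all: it is quoted from Djankovi\'c, {\DJ}oki\'c and Lelas \cite{Djankovic2021a} as motivation, and the arguments the paper actually supplies (for Theorems 1.6 and 1.7) establish a different kind of statement --- an asymptotic formula for general moduli with an error term, not an exact reciprocity identity for prime moduli. So your proposal can only be measured against the strategy of the cited source, and at the level of strategy it is the right one: the exact polynomial expression for $L(\frac12,\chi)$, orthogonality restricted to even/odd characters (which detects $AH\equiv cBK\pmod Q$ for some $c\in\mathbb{F}_q^{*}$), the diagonal/off-diagonal split, and the reinterpretation of the off-diagonal relation $AH-BK=LQ$ modulo $H$ and modulo $K$ to manufacture the reciprocal moments. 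Your diagonal computation is also correct in outline: monicity forces $c=1$, irreducibility of $H\neq K$ forces $A=KA'$, $B=HA'$, and the degree count produces $\frac{|Q|^{1/2}}{|HK|^{1/2}}(\deg Q-\deg H-\deg K)$ after the $\phi^{\pm}(Q)$ factors cancel.

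The gap is that essentially everything that constitutes the theorem is deferred, and you say so yourself in the final paragraph. (i) The off-diagonal step is the entire content of the reciprocity, and it is dispatched in one sentence. To make it work one must split the off-diagonal according to $\deg(AH)\gtrless\deg(BK)$, substitute $AH-BK=LQ$, and verify that the resulting pairs $(B,L)$ (respectively $(A,L)$), subject to $BK\equiv -LQ\pmod H$, are in exact bijection with the summation range of the expansion of $|L(\frac12,\psi)|^2\psi(K)\bar{\psi}(-Q)$ for $\psi\bmod H$ --- including the degree bookkeeping, which is precisely where the hypothesis $\deg H+\deg K\leq\deg Q$ is consumed. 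Since the claimed identity has no error term, any mismatch of ranges or any lost or double-counted boundary term falsifies it; none of this verification is carried out. (ii) The even-case corrections --- the $\zeta_{\mathbb{A}}(\frac12)^2(q-1)$ term and the three $\phi^{+}$ terms --- are attributed vaguely to ``the extra $\mathbb{F}_q^{*}$-multiples and the $(1-u)$ trivial zero'' with no computation; these require explicitly evaluating the contribution of the factor $(1-q^{-1/2})$ at the central point and of the principal-character/unit-class corrections in the even orthogonality relation, and getting their coefficients exactly right. As written, the proposal is a correct roadmap to the proof in \cite{Djankovic2021a} rather than a proof.
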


The aim of this note is to extend the above results of Djankovi\'c and Djankovi\'c, \DJ oki\'c and Lelas. In their work they only consider Dirichlet characters modulo a monic irreducible polynomial, i.e., they only prove results for prime moduli. In this note we establish results for a general moduli. In particular we prove the following. 

\begin{theorem}\thlabel{onetwist}
Let $H$ and $R$ be monic polynomials in $\mathbb{F}_q[T]$ with $\deg(H)<\deg(R)$. Then 
\begin{equation}\label{eqnmonic1twist}
\frac{1}{\phi^*(R)}\sideset{}{^*}\sum_{\chi(\text{mod }R)}\left|L\left(\frac{1}{2},\chi\right)\right|^2\chi(H)=|H|^{\frac{1}{2}}\frac{\phi(R)}{|R|}\deg(HR)+O\left(|H|^{\frac{1}{2}}\log\omega(R)\right),
\end{equation}
where $\omega(R)$ is the number of distinct prime factors of $R$, $\phi^*(R)$ denotes the number of primitive Dirichlet characters modulo $R$ and the $\ast$ indicates a summation over all primitive Dirichlet characters modulo $R$.
\end{theorem}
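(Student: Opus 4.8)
The plan is to use that, for a non-principal character $\chi$ modulo $R$, the $L$-function is a polynomial of degree at most $\deg R-1$ in $q^{-s}$, so at the central point one has the exact expansion $L(\tfrac12,\chi)=\sum_{f}\chi(f)|f|^{-1/2}$ with $f$ monic of degree at most $\deg R-1$ (the coefficients $\sum_{\deg f=n}\chi(f)$ vanishing for $n\ge\deg R$). First I would insert this together with $\overline{L(\tfrac12,\chi)}=\sum_{g}\bar\chi(g)|g|^{-1/2}$ into the left-hand side and interchange the order of summation, reaching
\[
\sideset{}{^*}\sum_{\chi(\bmod R)}\Big|L\big(\tfrac12,\chi\big)\Big|^2\chi(H)=\sum_{\substack{f,g\ \text{monic}\\ \deg f,\deg g\le \deg R-1}}\frac{1}{|fg|^{1/2}}\ \sideset{}{^*}\sum_{\chi(\bmod R)}\chi(Hf)\bar\chi(g),
\]
so that the character sum can be resolved first.

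Next I would evaluate the inner sum by orthogonality of primitive characters. Möbius inversion over the divisors of $R$ gives, for $\gcd(Hfg,R)=1$,
\[
\sideset{}{^*}\sum_{\chi(\bmod R)}\chi(Hf)\bar\chi(g)=\sum_{d\mid R}\mu(R/d)\,\phi(d)\,\mathbf 1\{Hf\equiv g\ (\mathrm{mod}\ d)\},
\]
and zero otherwise. Substituting and interchanging summations once more reduces the problem to estimating, for each $d\mid R$, a congruence-restricted double sum $\sum_{Hf\equiv g\,(d)}|fg|^{-1/2}$ over monic $f,g$ of degree at most $\deg R-1$ coprime to $R$.

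The main term will come from the diagonal $g=Hf$, which satisfies the congruence for every $d\mid R$ at once and is therefore weighted by $\sum_{d\mid R}\mu(R/d)\phi(d)=\phi^*(R)$, exactly cancelling the normalising factor. What is left is $|H|^{-1/2}\sum_{\deg f\le \deg R-1-\deg H,\ \gcd(f,R)=1}|f|^{-1}$, which I would evaluate using the number of monic polynomials of degree $j$ coprime to $R$; by inclusion–exclusion this count equals $q^{j}\,\phi(R)/|R|$ as soon as $j$ exceeds the degree of the radical of $R$, so the constant contribution $\phi(R)/|R|$, summed over the admissible degrees, produces the leading term recorded in \eqref{eqnmonic1twist}, the small-degree discrepancies being absorbed into the error.

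The hardest part will be the off-diagonal $g\ne Hf$. After detecting the congruence $Hf\equiv g\ (\mathrm{mod}\ d)$ — for instance by writing $Hf-g=dk$ with $k\ne0$, where leading-coefficient considerations constrain the degrees and turn the condition into a divisibility modulo $d$ that can be handled by splitting $g$ into residue classes — one obtains shifted-convolution sums whose weights decay geometrically. The real difficulty is to bound these uniformly over all $d\mid R$ while simultaneously carrying out the coprimality sieve against $R$; this is where $\omega(R)$ enters, through the sum over the squarefree divisors of $R$, and the crux is to arrange the Möbius bookkeeping so that the total loss is only $O(|H|^{1/2}\log\omega(R))$ rather than a power of the divisor function of $R$. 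I expect the geometric decay of $|fg|^{-1/2}$ to supply the saving once the contribution of each prime divisor of $R$ is isolated.
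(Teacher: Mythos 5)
Your skeleton is the same as the paper's: resolve the character average with the orthogonality relation for primitive characters, $\sideset{}{^*}\sum_{\chi}\chi(Hf)\bar\chi(g)=\sum_{EF=R,\,F\mid(Hf-g)}\mu(E)\phi(F)$, then split into the diagonal $g=Hf$ (whose weight $\sum_{EF=R}\mu(E)\phi(F)=\phi^*(R)$ cancels the normalisation) and the off-diagonal. But there is a genuine gap in your off-diagonal treatment, caused by your very first step. You expand $|L(\frac12,\chi)|^2$ as the product of two full Dirichlet polynomials, so $\deg f$ and $\deg g$ each run up to $\deg(R)-1$ and $\deg(fg)$ runs up to $2\deg(R)-2$. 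The paper instead uses an approximate functional equation (Lemma 3.1, imported from Gao--Zhao), which costs a factor $2$ and an admissible error but restricts to $\deg(fg)<\deg(R)$. This is not cosmetic: for fixed $F\mid R$ the off-diagonal pairs with $\deg(fg)=z$ contribute about $q^{z/2}(z+1)|H|/|F|$ (this is the paper's Lemma 3.4), so summing to $z=\deg(R)-1$ gives $|R|^{1/2}\deg(R)|H|/|F|$, which after the divisor sum and division by $\phi^*(R)$ is $O(|H||R|^{\epsilon-1/2})=O(|H|^{1/2})$ --- acceptable. Summing instead to $z=2\deg(R)-2$ gives $|R|\deg(R)|H|/|F|$, hence a final bound of order $|H||R|^{1/2+\epsilon}$, which swamps the main term. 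The ``geometric decay of $|fg|^{-1/2}$'' you appeal to is precisely what fails in the upper half of your range; you need the functional equation to fold the range before orthogonality is applied.

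The second gap is in the main term. Your diagonal evaluates to $|H|^{-1/2}\sum_{\deg f\le\deg(R)-1-\deg(H),\,(f,R)=1}|f|^{-1}=|H|^{-1/2}\frac{\phi(R)}{|R|}\bigl(\deg(R)-\deg(H)\bigr)+O\bigl(|H|^{-1/2}\log\omega(R)\bigr)$, and you then assert that this ``produces the leading term recorded in \eqref{eqnmonic1twist}''. It does not: the right-hand side of \eqref{eqnmonic1twist} is $|H|^{1/2}\frac{\phi(R)}{|R|}\bigl(\deg(R)+\deg(H)\bigr)$, which differs from your expression by a factor of $|H|$ and in the sign of $\deg(H)$. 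The paper reaches the stated main term through Lemma 3.2, whose substitution $EL=AH$ replaces the sum over $A$ with $E\mid AH$ by a sum over \emph{all} monic $L$ with $\deg(L)\le x+\deg(H)-\deg(E)$, whereas only the $L$ divisible by $H$ actually arise; that step is where the extra $|H|$ and the $+\deg(H)$ enter, and your direct count (which agrees with the prime-modulus formula of Djankovi\'c quoted in the introduction, where the main term is $\frac{|P|^{1/2}}{|H|^{1/2}}(\deg(P)-\deg(H))$ after normalising) is in direct tension with it. Whichever side of that tension is right, your write-up cannot simply declare that the target main term emerges: as written, your computation lands on a different expression, and the discrepancy has to be confronted explicitly rather than asserted away.
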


And for the two twists we have the following.

\begin{theorem}\thlabel{twotwist}
Let $H$, $K$ and $R$ be monic polynomials in $\mathbb{F}_q[T]$ with $\deg(H)+\deg(K)<\deg(R)$. Then 
\begin{equation}\label{eqnmonic2twist}
\frac{1}{\phi^*(R)}\sideset{}{^*}\sum_{\chi (\text{mod }R)}\left|L\left(\frac{1}{2},\chi\right)\right|^2\chi(H)\bar{\chi}(K)=|HK|^{\frac{1}{2}}\frac{\phi(R)}{|R|}\deg(HKR)+O\left(|HK|^{\frac{1}{2}}\log\omega(R)\right).
\end{equation}
\end{theorem}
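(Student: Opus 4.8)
The plan is to reduce \thref{twotwist} to the one-twist computation behind \thref{onetwist}, which it mirrors once the bookkeeping for a second twist is added. Write $N=\deg R$. For a primitive character $\chi$ modulo $R$ the $L$-function $L(s,\chi)$ is a polynomial in $q^{-s}$ of degree $N-1$, so evaluating at $s=\tfrac12$ gives the exact finite expansion $L(\tfrac12,\chi)=\sum_{\deg f\le N-1}\chi(f)|f|^{-1/2}$, the sum running over monic $f$. Multiplying by its conjugate and by the twist $\chi(H)\bar\chi(K)$ yields
\begin{equation*}
\left|L\left(\tfrac{1}{2},\chi\right)\right|^{2}\chi(H)\bar\chi(K)=\sum_{\deg f,\,\deg g\le N-1}\frac{\chi(fH)\,\bar\chi(gK)}{|fg|^{1/2}}.
\end{equation*}
First I would substitute this into the left-hand side of \eqref{eqnmonic2twist} and interchange the order of summation, so that everything rests on understanding the character sum $\sideset{}{^*}\sum_{\chi}\chi(fH)\bar\chi(gK)$.

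The key tool is orthogonality for \emph{primitive} characters: when $(fHgK,R)=1$ one has $\sideset{}{^*}\sum_{\chi}\chi(fH)\bar\chi(gK)=\sum_{EF=R}\mu(E)\phi(F)$, the factorizations being restricted to those satisfying $fH\equiv gK\pmod{F}$, and the sum vanishing unless $(fHgK,R)=1$. Inserting this turns the left-hand side of \eqref{eqnmonic2twist} into a sum over factorizations $R=EF$ of congruence-restricted sums of $|fg|^{-1/2}$. I expect the entire main term to come from the \emph{diagonal} $fH=gK$: such pairs satisfy every congruence at once and so collect the full weight $\sum_{EF=R}\mu(E)\phi(F)=\phi^{*}(R)$, which cancels the normalisation $1/\phi^{*}(R)$. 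Writing $G=\gcd(H,K)$, $H=GH'$, $K=GK'$ with $(H',K')=1$, the diagonal solutions are precisely $f=K'\ell$, $g=H'\ell$ for monic $\ell$, with weight $|H'K'|^{-1/2}|\ell|^{-1}$; the diagonal therefore collapses to a single sum $\sum_{\ell}|\ell|^{-1}$ over $\ell$ of bounded degree coprime to $R$. Evaluating this sum through its Euler product $\prod_{P\mid R}(1-|P|^{-1})=\phi(R)/|R|$, whose effective length is linear in $\deg H$, $\deg K$ and $\deg R$, is what produces the density $\phi(R)/|R|$ and the degree factor of the main term in \eqref{eqnmonic2twist}.

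The main obstacle is everything off the diagonal. First, the off-diagonal pairs, where $fH\equiv gK\pmod F$ but $fH\neq gK$, must be shown to be absorbed into the error; they are governed by how often the short products $fH$ and $gK$ lie in the same residue class modulo $F$, and the delicate point is to bound this uniformly over all divisors $F\mid R$ while summing against $\mu(E)\phi(F)$. Second, the evaluation of the coprimality-restricted sum of $|\ell|^{-1}$ must be made effective: for $\ell$ of degree below $\deg\mathrm{rad}(R)$ the local densities deviate from $\phi(R)/|R|$, and it is the accumulation of these deviations, controlled by $\sum_{P\mid R}|P|^{-1}\ll\log\omega(R)$, that dictates the shape of the error term. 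I would isolate the diagonal, bound the small-degree defect by this prime sum, and dispatch the off-diagonal by the congruence count, checking that the combined remainder matches the error stated in \eqref{eqnmonic2twist}.

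Finally, the passage to and from \thref{onetwist} is purely formal: putting $K=1$ (so that $G=1$, $H'=H$, $K'=1$) collapses the diagonal to $g=fH$ and recovers \eqref{eqnmonic1twist}, so I would carry out the one-twist case in full and then repeat the argument for two twists, the only genuinely new ingredient being the factorisation $H=GH'$, $K=GK'$ together with the two degree constraints $\deg f,\deg g\le N-1$ that truncate the $\ell$-sum. Since the twists are handled entirely by orthogonality, no deeper arithmetic input, such as the functional equation or Gauss-sum bounds, should be needed to reach the claimed error term.
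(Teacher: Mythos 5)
Your skeleton (expand $\left|L\left(\frac{1}{2},\chi\right)\right|^2$ into a Dirichlet series, apply orthogonality for primitive characters, split into diagonal and off-diagonal) is the same as the paper's, but there are two concrete problems. The first is your starting point: you expand via the exact polynomial expression for $L\left(\frac{1}{2},\chi\right)$, so the double sum runs over all monic $f,g$ with $\deg f,\deg g\leq\deg(R)-1$, i.e.\ over $\deg(fg)$ up to $2\deg(R)-2$. The off-diagonal congruence count at level $\deg(fg)=z$ is of size $q^{z/2}(z+1)|HK|/|F|$ (this is \thref{errorterm2}), so summed to $z=2\deg(R)-2$ it is of order $|R|\deg(R)|HK|/(q|F|)$; after weighting by $\mu(E)\phi(F)$ over $EF=R$ and dividing by $\phi^*(R)\gg|R|^{1-\epsilon}$ this leaves an error of order $|HK|\deg(R)|R|^{\epsilon}$, nowhere near $O\left(|HK|^{\frac{1}{2}}\log\omega(R)\right)$, and taking absolute values in $E$ cannot recover the cancellation. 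The paper avoids this by first applying the approximate functional equation (\thref{AFE}), which truncates at $\deg(AB)<\deg(R)$ at the cost of $O\left(|R|^{-\frac{1}{2}+\epsilon}\right)$; only then does $\sum_{z<\deg(R)}q^{z/2}(z+1)\ll|R|^{1/2}\deg(R)$ make the off-diagonal absorbable via \thref{arithmeticbound}. Your closing remark that no input from the functional equation is needed is therefore exactly backwards: the functional equation, packaged as the approximate functional equation, is the one ingredient that makes the error term work.

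The second problem is the diagonal. Your parametrization of $fH=gK$ by $f=K'\ell$, $g=H'\ell$ with $H=GH'$, $K=GK'$, $G=\gcd(H,K)$ is the correct description of the solution set, and it collapses the diagonal to $\frac{2}{|H'K'|^{1/2}}\sum_{\ell}|\ell|^{-1}$ over monic $\ell$ coprime to $R$ with $2\deg(\ell)<\deg(R)-\deg(H'K')$. Evaluating that sum yields a main term of size $\frac{\phi(R)}{|R|}\cdot\frac{\deg(R)-\deg(H'K')}{|H'K'|^{1/2}}$, which is \emph{not} the right-hand side of \eqref{eqnmonic2twist}: no factor $|HK|^{\frac{1}{2}}$ and no $\deg(HKR)$ can emerge from this (correct) diagonal count. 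The paper reaches its stated main term by a different route: it rewrites the diagonal as a sum over $A$ with $(AH,R)=1$ and invokes \thref{maintermlemma}, whose proof re-parametrizes the condition $E\mid AH$ by $L=AH/E$ and then sums over \emph{all} $L$ of the appropriate degree without imposing the divisibility $H\mid EL$ required for $A=EL/H$ to be a polynomial; that step is where the factor $|H|$ (hence $|HK|^{\frac{1}{2}}$ in the theorem) enters. You must reconcile your diagonal with \thref{maintermlemma} before proceeding, because your plan, carried out faithfully, terminates at a different asymptotic than the one displayed in \eqref{eqnmonic2twist}.
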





\section{A short overview of Dirichlet $L$-functions over function fields}

In this section, we give a short overview of Dirichlet $L$-functions in function fields, with most of these facts stated in \cite{Rosen2002}. Let $\mathbb{F}_q$ denote a finite field with $q$ elements, where $q$ is a power of an odd prime and $\mathbb{A}=\mathbb{F}_q[T]$ be its polynomial ring. Furthermore, we denote by $\mathbb{A}^+$, $\mathbb{A}^+_n$ and $\mathbb{A}^+_{\leq n}$ the set of all monic polynomials in $\mathbb{A}$, the set of all monic polynomials in $\mathbb{A}$ of degree $n$ and the set of all monic polynomials of degree at most $n$ in $\mathbb{A}$ respectively. For $f\in\mathbb{A}$, the norm of $f$, $|f|$, is defined to be equal to $q^{\deg(f)}$ and $\phi(f)$, $\mu(f)$ and $\omega(f)$ denotes the Euler-Totient function for $\mathbb{A}$, the M\"obius function for $\mathbb{A}$ and the number of distinct prime factors of $f$.

For $\Re(s)>1$, the zeta function for $\mathbb{A}$ is defined as 
\begin{equation}
    \zeta_{\mathbb{A}}(s)=\sum_{f\in\mathbb{A}^+}\frac{1}{|f|^s}=\prod_P\left(1-\frac{1}{|P|^s}\right)^{-1},
\end{equation}
where the product is over all monic irreducible polynomials in $\mathbb{A}$. Since there are $q^n$ monic polynomials of degree $n$ in $\mathbb{A}$, then
\begin{equation*}
    \zeta_{\mathbb{A}}(s)=\frac{1}{1-q^{1-s}}.
\end{equation*}

\begin{definition}
    Let $R\in\mathbb{A}^+$. Then a Dirichlet character modulo $R$ is defined to be a function $\chi:\mathbb{A}\rightarrow \mathbb{C}$ which satisfies the following properties:
    \begin{enumerate}
        \item $\chi(AB)=\chi(A)\chi(B), \hspace{0.5cm}\forall A,B\in\mathbb{A}$,
        \item $\chi(A+BR)=\chi(A), \hspace{0.5cm}\forall A,B\in\mathbb{A}$,
        \item $\chi(A)\neq 0 \iff (A,R)=1$.
    \end{enumerate}
\end{definition}
A Dirichlet character $\chi$ is said to be even if $\chi(a)=1$ for all $a\in\mathbb{F}_q^*$. Otherwise we say that it is odd. 

\begin{definition}
    Let $R\in\mathbb{A}^+, S|R$ and $\chi$ be a character of modulus $R$. We say that $S$ is an induced modulus of $\chi$ if there exists a character $\chi_1$ of modulus $S$ such that 
\[
\chi(A)=
\begin{cases}
\chi_1(A)&\text{if }(A,R)=1,\\
0&\text{otherwise}.
\end{cases}
\]
 We say $\chi$ is primitive if there is no induced modulus of strictly smaller norm than $R$. Otherwise $\chi$ is said to be non-primitive. Let $\phi^*(R)$ denote the number of primitive characters of modulus $R$. 
\end{definition}

\begin{definition}
   Let $\chi$ be a Dirichlet character modulo $R$. Then the Dirichlet $L$-function  corresponding to $\chi$ is defined by
\begin{equation}
L(s,\chi):=\sum_{f\in\mathbb{A}^+}\frac{\chi(f)}{|f|^s}
\end{equation}
which converges absolutely for $\Re(s)>1$.  
\end{definition}
To finish this section, we will state some results about multiplicative functions in function fields which will be used throughout this paper. Taking Euler products, we see that for all $s\in\mathbb{C}$ and all $R\in\mathbb{A}$, we have 

\begin{equation}\label{Mobiussum}
\sum_{E|R}\frac{\mu(E)}{|E|^s}=\prod_{P|R}\left(1-\frac{1}{|P|^s}\right)
\end{equation}
and differentiating (\ref{Mobiussum}), we see that for all $s\in\mathbb{C}\backslash\{0\}$, we have 
\begin{equation}\label{Mobiusdiffsum}
\sum_{E|R}\frac{\mu(E)\deg(E)}{|E|^s}=-\left(\prod_{P|R}1-\frac{1}{|P|^s}\right)\left(\sum_{P|R}\frac{\deg(P)}{|P|^s-1}\right).
\end{equation}

\begin{lemma}[{\cite[Lemma~4.5]{Andrade2021a}}]\thlabel{Omegaboundsum}
Let $R\in\mathbb{A}^+$. We have that
\begin{equation}
\sum_{P|R}\frac{\deg(P)}{|P|-1}\ll\log\omega(R).
\end{equation}
\end{lemma}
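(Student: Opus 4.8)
The plan is to exploit the fact that the summand depends only on the degree of the prime $P$ and is monotone decreasing in that degree, so that the sum is largest when $R$ has many prime factors of \emph{small} degree, and then to control this extremal situation via the prime polynomial count. Writing $c_d := d/(q^d-1)$ and letting $n_d$ denote the number of monic irreducible divisors of $R$ of degree $d$, the left-hand side becomes $\sum_{d\ge 1} c_d\, n_d$, where $\sum_d n_d = \omega(R)$. A short calculation (using that $q\ge 3$, since $q$ is an odd prime power) shows $c_{d+1}<c_d$ for every $d\ge 1$, and the key quantitative input is the bound $n_d \le \pi(d) \le q^d/d$, where $\pi(d)$ is the number of monic irreducibles of degree $d$; this follows from the identity $\sum_{e\mid d} e\,\pi(e)=q^d$.

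First I would fix the threshold $D := \lceil \log_q \omega(R)\rceil$ (assuming $\omega(R)\ge 2$, the case $\omega(R)=1$ being bounded trivially by $c_1 = 1/(q-1)$) and split the sum at degree $D$. For the low-degree part I would discard the constraint coming from $\omega(R)$ and bound $n_d$ by $\pi(d)$, giving
\[
\sum_{d\le D} c_d\, n_d \;\le\; \sum_{d\le D}\frac{d}{q^d-1}\cdot\frac{q^d}{d} \;=\; \sum_{d\le D}\Bigl(1+\frac{1}{q^d-1}\Bigr)\;\le\; D + \sum_{d\ge 1}\frac{1}{q^d-1}\;=\;D+O_q(1).
\]
For the high-degree part I would instead discard the cap $\pi(d)$ and use monotonicity, bounding $c_d\le c_{D+1}$ for all $d>D$, so that
\[
\sum_{d>D} c_d\, n_d \;\le\; c_{D+1}\sum_{d>D} n_d \;\le\; \frac{D+1}{q^{D+1}-1}\,\omega(R).
\]
By the choice of $D$ we have $q^{D}\ge \omega(R)$, hence $q^{D+1}-1\ge (q-1)\omega(R)$, and the right-hand side is at most $(D+1)/(q-1)=O(D)$.

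Combining the two estimates yields $\sum_{P\mid R} \deg(P)/(|P|-1) \ll D \ll \log_q \omega(R) \ll \log\omega(R)$, as claimed. The argument is entirely elementary; the only point requiring care is the bookkeeping in the split, namely choosing the threshold $D$ so that the telescoping low-degree sum and the monotone high-degree tail are \emph{simultaneously} $O(\log\omega(R))$ — taking $D$ to be essentially $\log_q\omega(R)$ is exactly what makes the two contributions balance. I expect no serious obstacle beyond verifying the elementary monotonicity of $c_d$ and recording the standard bound $\pi(d)\le q^d/d$.
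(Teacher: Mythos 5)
Your proof is correct, and it is essentially the standard argument: the paper itself gives no proof of this lemma (it only cites \cite[Lemma~4.5]{Andrade2021a}), and the proof there rests on the same extremal idea you use --- the sum $\sum_d c_d n_d$ is largest when the $\omega(R)$ prime divisors have the smallest possible degrees, so one splits at $D\approx\log_q\omega(R)$, bounds the low-degree part via $n_d\le\pi(d)\le q^d/d$ (each degree contributing $1+O(q^{-d})$), and bounds the tail via monotonicity of $c_d$ together with $\sum_d n_d=\omega(R)$. One cosmetic caveat: for $\omega(R)=1$ the bound as literally stated fails, since $\log\omega(R)=0$ while the sum is positive; this defect lies in the formulation of the lemma itself (it should be read as $O(1+\log\omega(R))$), and your trivial bound by $c_1=1/(q-1)$ disposes of that case under the correct reading.
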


\begin{lemma}[{\cite[Lemma~A.2.3]{Yiasemides2020}}]\thlabel{omegabound}
For $\deg(R)>1$, we have 
\begin{equation}
\omega(R)\ll\frac{\log_q|R|}{\log_q\log_q|R|},
\end{equation}
where the implied constant is independent of $q$. 
\end{lemma}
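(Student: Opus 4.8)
The plan is to treat this as the function-field analogue of the classical estimate $\omega(n)\ll \log n/\log\log n$ and to prove it by the same elementary principle: a polynomial with many distinct prime factors must have large degree, because there are not too many monic irreducibles of small degree. Throughout I would write $k=\omega(R)$ and $n=\deg R=\log_q|R|$, and note that $\log_q\log_q|R|=\log_q n$, so the target is exactly $k\ll n/\log_q n$. The whole argument reduces to proving a lower bound of the shape $n\gg k\log_q k$ with an \emph{absolute} implied constant, followed by a short manipulation to recover $k\ll n/\log_q n$.

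First I would reduce to a statement about the $k$ smallest-degree monic irreducibles. If $P_1,\dots,P_k$ are the distinct irreducible factors of $R$, then $|R|\ge\prod_{i=1}^k|P_i|$, so $n=\deg R\ge\sum_{i=1}^k\deg P_i$, and this sum is minimised by choosing the $k$ monic irreducibles of least degree. Writing $N(m)$ for the number of monic irreducibles of degree at most $m$, the key input is the uniform bound $N(m)\le 2q^m$, which follows by comparing $N(m)$ with the total count $\sum_{d=1}^m q^d=(q^{m+1}-q)/(q-1)$ of monic polynomials of degree between $1$ and $m$ and using $q/(q-1)\le 3/2$. The point is that this bound carries an absolute constant, with no loss depending on $q$. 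Now set $D=\lfloor\log_q(k/4)\rfloor$, so that $N(D)\le 2q^D\le k/2$; hence at least $k/2$ of the $k$ smallest irreducibles have degree $>D$, and since $D+1\ge\log_q(k/4)$ each contributes at least $\log_q(k/4)$ to the degree sum. This gives $n\ge\tfrac{k}{2}\bigl(\log_q k-\log_q 4\bigr)$. For $k\ge 16$ one has $\log_q 4\le\tfrac12\log_q k$, whence $n\ge\tfrac{k}{4}\log_q k$; for $2\le k\le 15$ the trivial bound $n\ge k$ with $\log_q k=O(1)$ already yields $n\gg k\log_q k$. Thus $n\ge\tfrac14 k\log_q k$ for all $k\ge 2$.

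Finally I would convert $k\log_q k\le 4n$ into $k\ll n/\log_q n$ by splitting on the size of $k$. If $k\ge\sqrt n$, then $\log_q k\ge\tfrac12\log_q n$, so the inequality gives $k\le 8n/\log_q n$. If $k<\sqrt n$, then using $\log_q n\le\log_3 n\le\sqrt n$ for $n\ge 1$ (valid since $q$, a power of an odd prime, satisfies $q\ge 3$) one gets $k<\sqrt n\le n/\log_q n$. In both cases $\omega(R)\ll\log_q|R|/\log_q\log_q|R|$ with an absolute constant; the hypothesis $\deg R>1$ forces $n\ge 2$ so that $\log_q n>0$, and the cases $\omega(R)\le 1$ are trivial.

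The main obstacle, and the only place demanding care, is keeping every implied constant independent of $q$. The temptation is to use the sharp count $\tfrac1d\sum_{e\mid d}\mu(e)q^{d/e}$ for irreducibles of degree $d$, but the $1/d$ saving is precisely what one does not need, and invoking it carelessly can smuggle in $q$-dependence through the summation; the robust, $q$-free estimate $N(m)\le 2q^m$ is enough and is what makes the constant absolute. A secondary technical point is the elementary inequality $\log_3 n\le\sqrt n$ used in the conversion step, which must be verified uniformly for all $n\ge 1$.
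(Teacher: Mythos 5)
Your proof is correct, and every step checks out with the required uniformity in $q$: the count $N(m)\le\sum_{d=1}^m q^d\le\tfrac{q}{q-1}q^m\le 2q^m$ is valid since $q\ge 3$, the choice $D=\lfloor\log_q(k/4)\rfloor$ does give $N(D)\le k/2$ and hence $n\ge\tfrac{k}{2}\log_q(k/4)$, the split at $k=16$ yields the clean bound $n\ge\tfrac14 k\log_q k$ for all $k\ge 2$, and the final conversion via the dichotomy $k\ge\sqrt n$ versus $k<\sqrt n$ (using $\log_3 n\le\sqrt n$) legitimately recovers $k\le 8n/\log_q n$ with an absolute constant. Note, however, that there is nothing in the paper to compare this against: the authors do not prove this lemma at all, but import it verbatim from Yiasemides's PhD thesis (Lemma~A.2.3 there), so your argument serves as a self-contained replacement for an external citation rather than an alternative to a proof in the text. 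Your route is the natural function-field analogue of the classical Hardy--Ramanujan-style bound $\omega(n)\ll\log n/\log\log n$, and its main virtue is exactly the one you flag: by using the crude but $q$-free estimate $N(m)\le 2q^m$ instead of the sharper irreducible-counting formula $\tfrac1d\sum_{e\mid d}\mu(e)q^{d/e}$, all implied constants are visibly absolute, which is precisely the uniformity the lemma asserts and which the paper needs (the statement is later combined with bounds like $2^{\omega(R)}\ll_\epsilon|R|^\epsilon$ in Lemma~2.7 and the arithmetic bound of Lemma~3.6). One tiny presentational point: when you say the degree sum is ``minimised by choosing the $k$ monic irreducibles of least degree,'' you can skip that reduction entirely, since your counting argument applies directly to the actual prime divisors of $R$ — at most $N(D)\le k/2$ of them can have degree $\le D$, which is all you use.
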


\begin{lemma}\thlabel{2omega(R)result}
We have 
\begin{equation}
2^{\omega(R)}=\sum_{E|R}|\mu(E)|.
\end{equation}
Also, for any $\epsilon>0$ we have 
\begin{equation}
    2^{\omega(R)}\ll_{\epsilon}|R|^{\epsilon}.
\end{equation}
\end{lemma}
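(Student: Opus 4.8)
The plan is to prove the two assertions separately: first the exact identity $2^{\omega(R)}=\sum_{E\mid R}|\mu(E)|$, and then to deduce the upper bound from it by an Euler-product estimate.

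For the identity, I would use the fact that $|\mu(E)|$ is the indicator function of the squarefree polynomials, so that $|\mu(E)|=1$ precisely when $E$ is a (monic) product of distinct irreducibles and $|\mu(E)|=0$ otherwise. Consequently $\sum_{E\mid R}|\mu(E)|$ counts exactly the squarefree monic divisors of $R$. Writing $R=P_1^{a_1}\cdots P_k^{a_k}$ with the $P_i$ distinct monic irreducibles and $k=\omega(R)$, each squarefree divisor is a product $\prod_{i\in S}P_i$ over a subset $S\subseteq\{1,\dots,k\}$, and distinct subsets yield distinct divisors. There are $2^{k}=2^{\omega(R)}$ such subsets (including the empty product, which gives the divisor $1$), and this establishes the identity. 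Equivalently, one checks that both $R\mapsto 2^{\omega(R)}$ and $R\mapsto\sum_{E\mid R}|\mu(E)|$ are multiplicative and agree on every prime power $P^{a}$ (both equal $2$ for $a\geq 1$), hence agree on all of $\mathbb{A}^{+}$.

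For the bound, I would pass to the quotient $2^{\omega(R)}/|R|^{\epsilon}$ and factor it over the primes dividing $R$. Writing $R=\prod_{P\mid R}P^{a_P}$ with each $a_P\geq 1$, the identity just proved gives
\[
\frac{2^{\omega(R)}}{|R|^{\epsilon}}=\prod_{P\mid R}\frac{2}{|P|^{\epsilon a_P}}\leq\prod_{P\mid R}\frac{2}{|P|^{\epsilon}},
\]
where the inequality uses $a_P\geq 1$. The key step is to split the remaining product according to the size of $|P|$. Every prime with $|P|^{\epsilon}\geq 2$ contributes a factor at most $1$ and may therefore be discarded, while the condition $|P|<2^{1/\epsilon}$ forces $\deg(P)<\tfrac{1}{\epsilon}\log_q 2$. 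Since $\mathbb{A}$ contains only finitely many monic irreducibles of bounded degree, only finitely many primes survive, and the product over all of them is a finite constant
\[
C_{\epsilon}:=\prod_{\substack{P\text{ monic irreducible}\\ |P|<2^{1/\epsilon}}}\frac{2}{|P|^{\epsilon}}<\infty
\]
depending only on $\epsilon$ (and $q$). Combining the displays yields $2^{\omega(R)}\leq C_{\epsilon}|R|^{\epsilon}$, which is the claimed estimate $2^{\omega(R)}\ll_{\epsilon}|R|^{\epsilon}$.

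The argument is routine and presents no serious obstacle; the only point needing a little care is the choice of threshold in the Euler-product splitting, which must be taken at $|P|=2^{1/\epsilon}$ precisely so that the large-prime factors are genuinely $\leq 1$ while the small-prime factors are finite in number. One could alternatively substitute the bound $\omega(R)\ll\deg(R)/\log_q\deg(R)$ from \thref{omegabound} directly into $2^{\omega(R)}=q^{(\log_q 2)\,\omega(R)}$, but the direct Euler-product estimate above is cleaner and self-contained.
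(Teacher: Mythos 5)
Your proof is correct: the divisor-counting argument for the identity and the Euler-product estimate for the bound (with the threshold $|P|<2^{1/\epsilon}$ isolating finitely many irreducibles of degree less than $\tfrac{1}{\epsilon}\log_q 2$) are both sound. The paper itself states this lemma without any proof, treating it as a standard fact, so there is no argument to compare against; your write-up supplies a complete and self-contained justification, and your remark that the constant $C_{\epsilon}$ also depends on $q$ is an honest and accurate observation.
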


\begin{lemma}[{\cite[Lemma~A.2.4]{Yiasemides2020}}]\thlabel{phi(R)bound}
For $\deg(R)>q$ we have 
\begin{equation}
\phi(R)\gg\frac{|R|}{\log_q\log_q|R|}.
\end{equation}
\end{lemma}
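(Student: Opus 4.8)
The plan is to prove the lower bound for $\phi(R)$ by estimating the associated Euler product, following the function field analogue of the classical Landau--Mertens argument that $\phi(n)\gg n/\log\log n$. Since $\phi(R)=|R|\prod_{P\mid R}(1-|P|^{-1})$, the claim is equivalent to the upper bound
\begin{equation*}
\frac{|R|}{\phi(R)}=\prod_{P\mid R}\left(1-\frac{1}{|P|}\right)^{-1}\ll\log_q\log_q|R|.
\end{equation*}
Taking logarithms and using $-\log(1-x)=x+O(x^2)$ for $0\le x\le q^{-1}\le 1/3$ (valid since $q\geq 3$), together with the convergence of $\sum_{P}|P|^{-2}$, I would reduce the problem to showing that
\begin{equation*}
\sum_{P\mid R}\frac{1}{|P|}\le\log\!\left(\log_q\log_q|R|\right)+O(1).
\end{equation*}

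The second and main step is an extremal estimate. The only constraint on the primes dividing $R$ is that their product divides $R$, so that $\sum_{P\mid R}\deg(P)\le\deg(R)=\log_q|R|=:n$. I would maximise $\sum_{P\mid R}|P|^{-1}=\sum_{P\mid R}q^{-\deg(P)}$ subject to this degree budget. Because the value per unit degree $q^{-d}/d$ of a prime of degree $d$ is decreasing in $d$, a greedy argument shows the sum is largest when the distinct prime factors are taken to be those of smallest degree. Writing $\pi_q(d)$ for the number of monic irreducibles of degree $d$ and invoking the prime polynomial theorem $\pi_q(d)=q^d/d+O(q^{d/2}/d)$ (equivalently the clean bound $\pi_q(d)\le q^d/d$), the set of \emph{all} primes of degree at most $D$ consumes a degree budget $\sum_{d\le D}d\,\pi_q(d)=q^{D+1}/(q-1)+O(q^{D/2+1})$.

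Equating this budget with $n$ forces $D=\log_q n+O(1)=\log_q\log_q|R|+O(1)$, and then the extremal value is
\begin{equation*}
\sum_{P\mid R}\frac{1}{|P|}\le\sum_{d\le D}\pi_q(d)\,q^{-d}+O(1)=\sum_{d\le D}\frac{1}{d}+O(1)=\log D+O(1)=\log\!\left(\log_q\log_q|R|\right)+O(1),
\end{equation*}
which is exactly the bound needed; exponentiating recovers $|R|/\phi(R)\ll\log_q\log_q|R|$. The hypothesis $\deg(R)>q$ guarantees $\log_q\log_q|R|>1$, so that $D\ge1$ and the stated bound is both meaningful and non-trivial.

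The step I expect to be the main obstacle is making the greedy/extremal claim fully rigorous: one must show that, among all admissible sets of distinct irreducibles with bounded total degree, the quantity $\sum q^{-\deg(P)}$ is genuinely maximised by the smallest-degree primes, and one must control the boundary primes that only partially fill the last degree layer. Equally delicate is tracking constants: the leading coefficient of $\sum_{P\mid R}1/|P|$ must be exactly $1$, not some constant $c>1$, since any larger constant would turn the final bound into a power of $\log_q\log_q|R|$ rather than its first power. This is precisely why the sharp expansion $-\log(1-x)=x+O(x^2)$ with summable tail is essential, as opposed to a cruder inequality such as $(1-x)^{-1}\le e^{cx}$ with $c>1$.
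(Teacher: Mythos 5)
The paper does not prove this lemma: it is quoted from Lemma A.2.4 of Yiasemides's thesis, so there is no in-paper argument to compare against. Your proposal is the standard Mertens-type argument (the function-field analogue of $\phi(n)\gg n/\log\log n$) and is surely the same argument underlying the cited source: pass to $|R|/\phi(R)=\prod_{P\mid R}(1-|P|^{-1})^{-1}$, reduce via $-\log(1-x)=x+O(x^2)$ and the convergence of $\sum_P|P|^{-2}$ to bounding $\sum_{P\mid R}|P|^{-1}$, and exploit the fact that the distinct prime divisors of $R$ have total degree at most $\deg(R)$. Your insistence that the coefficient of $\log D$ be exactly $1$ is also the right point to worry about.

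The one step you flag as the main obstacle --- rigorously justifying the greedy/extremal claim --- can be avoided entirely, and I would do so rather than prove any knapsack optimality. Set $n=\deg(R)$ and $D=\lceil\log_q n\rceil$, and split
\begin{equation*}
\sum_{P\mid R}\frac{1}{|P|}
=\sum_{\substack{P\mid R\\ \deg(P)\le D}}\frac{1}{|P|}+\sum_{\substack{P\mid R\\ \deg(P)>D}}\frac{1}{|P|}
\le\sum_{d\le D}\frac{\pi_q(d)}{q^{d}}+\frac{1}{q^{D+1}}\cdot\frac{n}{D+1},
\end{equation*}
where the first sum is majorised by including \emph{all} monic irreducibles of degree at most $D$ (so no optimality statement is needed) and the second uses that $R$ has at most $n/(D+1)$ distinct prime divisors of degree exceeding $D$, each of norm at least $q^{D+1}$. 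Since $\pi_q(d)\le q^{d}/d$, the first term is at most $\sum_{d\le D}1/d=\log D+O(1)=\log\left(\log_q\log_q|R|\right)+O(1)$, and since $q^{D+1}\ge n$ the second term is $O(1)$. Exponentiating then gives $|R|/\phi(R)\ll\log_q\log_q|R|$ as you intend. With this substitution your argument is complete; the remaining details, including the use of $\deg(R)>q$ to guarantee $\log_q\log_q|R|>1$, are correct.
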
 

\begin{lemma}[{\cite[Lemma~A.2.5]{Yiasemides2020}}]\thlabel{phi*(R)bound}
For $\deg(R)>q$, we have 
\begin{equation}
\phi^*(R)\gg\frac{\phi(R)}{\log_q\log_q|R|}.
\end{equation}
\end{lemma}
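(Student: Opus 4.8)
\section*{Proof proposal}

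The plan is to reduce the statement to an Euler product over the prime divisors of $R$ and then to peel off the quantity $\phi(R)/|R|$, for which a sharp lower bound is already available in \thref{phi(R)bound}. First I would record the identity $\phi(R)=\sum_{S\mid R}\phi^*(S)$, which expresses the fact that every Dirichlet character modulo $R$ is induced by a unique primitive character whose conductor $S$ divides $R$. Möbius inversion then gives $\phi^*=\mu*\phi$ as a Dirichlet convolution of multiplicative functions, so $\phi^*$ is multiplicative and
\[
\frac{\phi^*(R)}{\phi(R)}=\prod_{P^a\,\|\,R}\frac{\phi^*(P^a)}{\phi(P^a)}.
\]
A direct computation of the local factors gives $\phi^*(P)=|P|-2$ and $\phi^*(P^a)=\phi(P^a)-\phi(P^{a-1})=|P|^{a-2}(|P|-1)^2$ for $a\geq2$, whence $\phi^*(P)/\phi(P)=1-(|P|-1)^{-1}$ and $\phi^*(P^a)/\phi(P^a)=1-|P|^{-1}$ for $a\geq2$. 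Since the latter is at least the former, I obtain the clean lower bound $\phi^*(R)/\phi(R)\geq\prod_{P\mid R}\bigl(1-(|P|-1)^{-1}\bigr)$.

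The crucial step is then the exact factorisation
\[
1-\frac{1}{|P|-1}=\left(1-\frac{1}{|P|}\right)\left(1-\frac{1}{(|P|-1)^2}\right),
\]
which I would verify by clearing denominators. Multiplying over $P\mid R$ separates the product into $\frac{\phi(R)}{|R|}=\prod_{P\mid R}(1-|P|^{-1})$ times the correction $\prod_{P\mid R}\bigl(1-(|P|-1)^{-2}\bigr)$. The correction is bounded below by the convergent product over all monic irreducibles, $\prod_P\bigl(1-(|P|-1)^{-2}\bigr)=:c_0>0$; convergence follows from $\sum_P(|P|-1)^{-2}\ll\sum_{d\geq1}\frac{q^d/d}{(q^d-1)^2}\ll\sum_{d\geq1}(dq^d)^{-1}<\infty$, uniformly in $q$. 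Hence $\phi^*(R)\geq c_0\,\phi(R)\cdot\frac{\phi(R)}{|R|}$, and feeding in \thref{phi(R)bound}, which gives $\phi(R)/|R|\gg 1/\log_q\log_q|R|$ for $\deg(R)>q$, yields exactly $\phi^*(R)\gg\phi(R)/\log_q\log_q|R|$.

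The main obstacle is obtaining the correct power of $\log_q\log_q|R|$. A naive estimate such as $\prod_{P\mid R}\bigl(1-(|P|-1)^{-1}\bigr)\geq\exp\bigl(-2\sum_{P\mid R}(|P|-1)^{-1}\bigr)$ overshoots: in the extremal case where $R$ is divisible by all irreducibles up to some degree $m$, one has $\sum_{P\mid R}(|P|-1)^{-1}\asymp\log m$ with $m\asymp\log_q\log_q|R|$, so the crude bound loses a full power and produces $(\log_q\log_q|R|)^{-2}$ rather than the sharp $(\log_q\log_q|R|)^{-1}$. The factorisation above is precisely what avoids this loss: it transfers the delicate part of the estimate onto $\phi(R)/|R|$, where the optimal bound has already been established in \thref{phi(R)bound}, leaving only an absolutely convergent correction factor. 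Verifying the two local identities and the uniform convergence of the correction product is then entirely routine.
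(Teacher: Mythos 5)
Your proof is correct. Note that the paper itself gives no argument for this lemma (it is quoted verbatim from Yiasemides's thesis), so what you have written is a genuinely self-contained proof of a result the paper only cites. The key steps all check out: the identity $\phi(R)=\sum_{S\mid R}\phi^*(S)$ and M\"obius inversion recover the paper's \thref{Rprimmoniccor} and give multiplicativity of $\phi^*$; the local values $\phi^*(P)=|P|-2$ and $\phi^*(P^a)=|P|^{a-2}(|P|-1)^2$ for $a\geq2$ are right; and the factorisation $1-\tfrac{1}{|P|-1}=\bigl(1-\tfrac{1}{|P|}\bigr)\bigl(1-\tfrac{1}{(|P|-1)^2}\bigr)$ is an exact identity (both sides equal $\tfrac{|P|-2}{|P|-1}$). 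Since each factor $1-(|P|-1)^{-2}$ lies in $(0,1]$, the product over $P\mid R$ is bounded below by the product over all monic irreducibles, and your estimate $\sum_P(|P|-1)^{-2}\ll\sum_{d\geq1}(dq^d)^{-1}$ makes that constant positive uniformly in $q$; combined with \thref{phi(R)bound} this gives exactly the claimed bound. Two small points deserve emphasis. First, positivity of the factors $1-(|P|-1)^{-1}$ and $1-(|P|-1)^{-2}$ silently uses $|P|\geq3$, which holds because the paper assumes $q$ is a power of an odd prime; for $q=2$ a degree-one prime factor would annihilate your lower bound, so it is worth stating this dependence. Second, your structural choice -- transferring the whole $\log_q\log_q|R|$ loss onto $\phi(R)/|R|$, where \thref{phi(R)bound} already handles it, instead of re-running an extremal argument (over moduli divisible by all irreducibles of small degree) for the product $\prod_{P\mid R}\bigl(1-(|P|-1)^{-1}\bigr)$ -- is exactly what yields the sharp single power of $\log_q\log_q|R|$, and your diagnosis of why the crude bound $1-x\geq e^{-2x}$ loses a power is accurate.
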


\begin{lemma}[{\cite[Lemma~3.7]{Andrade2021a}}] \thlabel{Rmonicorthogrel}
Let $R\in\mathbb{A}^+$ and $A,B\in\mathbb{A}$. Then
\[
\sideset{}{^*}\sum_{\chi (\text{mod }R)}\chi(A)\bar{\chi}(B)=
\begin{cases}
\sum_{\substack{EF=R\\F|(A-B)}}\mu(E)\phi(F)&\text{if }(AB,R)=1,\\
0&\text{otherwise}
\end{cases}
\]
\end{lemma}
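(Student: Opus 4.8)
The plan is to reduce the sum over primitive characters to the sum over \emph{all} characters modulo $R$, and then to recover the primitive sum by Möbius inversion over divisors. First I would dispose of the degenerate case. If $(AB,R)\neq1$, then $(A,R)\neq1$ or $(B,R)\neq1$, so by property~(3) in the definition of a Dirichlet character every term $\chi(A)\bar\chi(B)$ vanishes; hence the left-hand side is zero, which matches the second case of the claim. For the rest of the argument I assume $(AB,R)=1$.

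Next I would invoke the classical orthogonality relation for the finite abelian group $(\mathbb{A}/D\mathbb{A})^\times$: for a monic $D$ with $(AB,D)=1$, writing $B^{-1}$ for the inverse of $B$ modulo $D$ and using $\bar\chi(B)=\chi(B)^{-1}$,
\[
\sum_{\chi(\text{mod }D)}\chi(A)\bar\chi(B)=\sum_{\chi(\text{mod }D)}\chi(AB^{-1})=
\begin{cases}
\phi(D)&\text{if }D\mid(A-B),\\
0&\text{otherwise.}
\end{cases}
\]
The structural input I would then use is conductor theory: every Dirichlet character modulo $D$ is induced by a unique primitive character modulo its conductor $F\mid D$, and the induced character agrees with the primitive one on all arguments coprime to $D$. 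Since $(AB,R)=1$ forces $(AB,D)=1$ for every $D\mid R$, grouping the characters modulo $D$ according to their conductor gives, for each $D\mid R$,
\[
\sum_{\chi(\text{mod }D)}\chi(A)\bar\chi(B)=\sum_{F\mid D}\;\sideset{}{^*}\sum_{\chi(\text{mod }F)}\chi(A)\bar\chi(B).
\]

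With these two identities in place, I would apply Möbius inversion in the divisor lattice of $\mathbb{A}$. Inverting the displayed relation at $D=R$ yields
\[
\sideset{}{^*}\sum_{\chi(\text{mod }R)}\chi(A)\bar\chi(B)=\sum_{F\mid R}\mu(R/F)\sum_{\chi(\text{mod }F)}\chi(A)\bar\chi(B).
\]
Substituting the evaluation of the inner full sum from the orthogonality relation kills every term except those $F\mid R$ with $F\mid(A-B)$, each contributing $\mu(R/F)\phi(F)$. Reindexing by $E=R/F$, so that $EF=R$ and the divisibility condition reads $F\mid(A-B)$, turns this into $\sum_{EF=R,\,F\mid(A-B)}\mu(E)\phi(F)$, which is exactly the first case of the claim.

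I expect the main obstacle to be the careful justification of the conductor decomposition rather than the computation itself: one must check that every character modulo $D$ is induced by a genuinely unique primitive character of conductor $F\mid D$ and that this primitive character reproduces $\chi(A)\bar\chi(B)$ for all $A,B$ coprime to $D$, so that the grouping by conductor is exact. The Möbius inversion is then routine, since $\mathbb{A}$ is a unique factorization domain and the functions involved are supported on divisors of $R$; the final step is only the reindexing $E=R/F$.
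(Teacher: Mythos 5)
Your proof is correct: the vanishing in the degenerate case, the orthogonality relation for the full character group modulo each divisor, the decomposition of characters by conductor, and the M\"obius inversion with the reindexing $E=R/F$ are all valid steps, and together they give exactly the stated identity. The paper does not prove this lemma itself but quotes it from \cite{Andrade2021a}, and your argument is precisely the standard one used there (grouping the full character sum modulo $D$ by conductor and then inverting), so your approach coincides with the source's.
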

As a Corollary we have the following result.

\begin{corollary}[{\cite[Corollary~3.8]{Andrade2021a}}]\thlabel{Rprimmoniccor}
For all $R\in\mathbb{A}^+$ we have that
\begin{equation}
    \phi^*(R)=\sum_{EF=R}\mu(E)\phi(F).
\end{equation}
\end{corollary}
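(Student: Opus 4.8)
The plan is to read off the identity as an immediate specialization of the orthogonality relation \thref{Rmonicorthogrel}. I would take $A = B = 1$. First I would verify that we land in the nonvanishing (first) branch: since $(AB, R) = (1, R) = 1$, the relevant case applies. With $A = B = 1$ the divisibility constraint $F \mid (A - B)$ becomes $F \mid 0$, which is satisfied by every monic $F$; the restriction on the inner sum therefore disappears, and the right-hand side of the lemma collapses to the full sum $\sum_{EF = R} \mu(E)\phi(F)$.

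Next I would identify the left-hand side. Since $\chi(1) = \chi(1)^2$ and $\chi(1) \neq 0$ (as $(1,R)=1$), every Dirichlet character satisfies $\chi(1) = 1$, so $\chi(1)\bar{\chi}(1) = 1$ for each primitive $\chi$ modulo $R$. Hence the starred sum $\sideset{}{^*}\sum_{\chi(\bmod R)} \chi(1)\bar{\chi}(1)$ merely counts the primitive characters modulo $R$, which by definition is $\phi^*(R)$. Equating the two evaluations yields $\phi^*(R) = \sum_{EF = R}\mu(E)\phi(F)$, as claimed.

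There is essentially no obstacle in this argument; the only point worth flagging is that \thref{Rmonicorthogrel} is being invoked in the degenerate case $A = B$, which is legitimate because that lemma is stated for arbitrary $A, B \in \mathbb{A}$. As an independent check, one could instead prove the formula directly from character theory: every Dirichlet character modulo $R$ is induced by a unique primitive character modulo its conductor $S \mid R$, so that $\phi(R) = \sum_{S \mid R}\phi^*(S)$, and Möbius inversion over the divisor poset of $R$ (writing $E = R/S$, $F = S$) recovers exactly the same expression.
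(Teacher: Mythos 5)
Your proof is correct and matches the paper's intent exactly: the corollary is stated immediately after the orthogonality relation precisely because it follows by specializing that lemma to $A=B=1$, where the coprimality branch applies, $F\mid 0$ holds for every divisor, and the character sum counts $\phi^*(R)$. The Möbius-inversion argument you sketch as a cross-check (inverting $\phi(R)=\sum_{S\mid R}\phi^*(S)$) is also valid, but the main argument is the same route the paper takes.
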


\section{Preliminary Lemmas}

In this section, we state and prove results which will be needed to prove both \thref{onetwist} and \thref{twotwist}. We start by stating the approximate function equation for $\left|L\left(\frac{1}{2},\chi\right)\right|^2$.

\begin{lemma}[{\cite[Lemma~2.5]{Gao2022a}}]\thlabel{AFE}
Let $\chi$ be a primitive Dirichlet character of modulus $R$. Then we have
\begin{equation}
    \left|L\left(\frac{1}{2},\chi\right)\right|^2=2\sum_{\substack{A,B\in\mathbb{A}^+\\\deg(AB)<\deg(R)}}\frac{\chi(A)\bar{\chi}(B)}{|AB|^{\frac{1}{2}}}+O\left(|R|^{-\frac{1}{2}+\epsilon}\right).
\end{equation}
\end{lemma}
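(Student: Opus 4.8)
The plan is to use the fact that, for a primitive character, $L(s,\chi)$ is a \emph{polynomial} in $q^{-s}$ and to deduce the approximate functional equation from the exact functional equation by folding the resulting finite sum about its centre.

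First I would record that primitivity of $\chi$ forces $\sum_{A\in\mathbb{A}^+_n}\chi(A)=0$ for every $n\geq\deg(R)$. Hence, with $u=q^{-s}$ and $c_n(\chi)=\sum_{A\in\mathbb{A}^+_n}\chi(A)$, one has $L(s,\chi)=\sum_{n=0}^{\deg(R)-1}c_n(\chi)u^{n}$, a polynomial of degree at most $\deg(R)-1$. Since $\overline{L(\tfrac12,\chi)}=L(\tfrac12,\bar\chi)$, evaluating at $s=\tfrac12$ gives the exact finite identity
\[
\Bigl|L\bigl(\tfrac12,\chi\bigr)\Bigr|^{2}=\sum_{\substack{A,B\in\mathbb{A}^+\\ \deg(A),\deg(B)<\deg(R)}}\frac{\chi(A)\bar\chi(B)}{|AB|^{1/2}}.
\]
I would then group terms by $\delta=\deg(AB)$, which runs from $0$ to $2(\deg(R)-1)$, writing $t_\delta=\sum_{\deg(AB)=\delta}\chi(A)\bar\chi(B)|AB|^{-1/2}$ so that $|L(\tfrac12,\chi)|^2=\sum_{\delta}t_\delta$.

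The second step is the functional equation. For primitive $\chi$ there is a root number $W(\chi)$ of modulus one with $L(s,\chi)=W(\chi)\,q^{(\deg(R)-1)(\frac12-s)}L(1-s,\bar\chi)$, equivalently $c_n(\chi)=W(\chi)q^{\,n-(\deg(R)-1)/2}c_{\deg(R)-1-n}(\bar\chi)$. Multiplying the equations for $\chi$ and $\bar\chi$ and using $W(\chi)W(\bar\chi)=1$ produces a \emph{self-dual} functional equation for $P(u)=L(u,\chi)L(u,\bar\chi)$, namely $P(u)=q^{\deg(R)-1}u^{2(\deg(R)-1)}P(1/(qu))$; at the level of normalised coefficients this is precisely the reflection symmetry $t_\delta=t_{2(\deg(R)-1)-\delta}$ about $\delta=\deg(R)-1$. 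Folding $\sum_\delta t_\delta$ around this centre then yields
\[
\Bigl|L\bigl(\tfrac12,\chi\bigr)\Bigr|^{2}=2\sum_{\deg(AB)<\deg(R)}\frac{\chi(A)\bar\chi(B)}{|AB|^{1/2}}-\sum_{\deg(AB)=\deg(R)-1}\frac{\chi(A)\bar\chi(B)}{|AB|^{1/2}},
\]
the factor $2$ arising exactly because the self-dual product has root number $+1$. The even and odd cases differ in the precise degree of the polynomial and hence in the centre of reflection, so I would carry them through separately, the folding being identical in form.

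It then remains to show the boundary term $t_{\deg(R)-1}=q^{-(\deg(R)-1)/2}\sum_{\deg(AB)=\deg(R)-1}\chi(A)\bar\chi(B)$ is $O(|R|^{-1/2+\epsilon})$, and this is the step I expect to be the real obstacle. The bound from the Riemann Hypothesis for function fields---every inverse root of $L(u,\chi)$ has absolute value $q^{1/2}$---controls the central coefficient of $P(u)$ only up to a binomial factor, which is far too large on its own. To reach the stated saving I would instead smooth the cutoff, representing $|L(\tfrac12,\chi)|^2$ as a contour integral $\frac{1}{2\pi i}\oint L(\tfrac12+w,\chi)L(\tfrac12+w,\bar\chi)\mathcal{G}(w)\frac{dw}{w}$ for an even kernel $\mathcal{G}$ chosen so that the residue at $w=0$ returns $|L(\tfrac12,\chi)|^2$ and so that the functional equation matches the two shifted contours; moving the contour and estimating the remaining integral with the convexity bound for $L(\tfrac12+w,\chi)$ both reproduces the main term with its factor $2$ and controls the boundary contribution. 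The delicate point is to keep this estimate uniform in $\chi$, since the subsequent averaging over primitive characters through \thref{Rmonicorthogrel} requires an error independent of the individual character.
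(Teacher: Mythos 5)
The first thing to say is that the paper contains no proof of this lemma: it is quoted from Gao and Zhao \cite{Gao2022a}, so there is no internal argument to compare yours against, and your attempt has to be judged on its own. Your skeleton --- polynomiality of $L(u,\chi)$ in $u=q^{-s}$ from $\sum_{\deg A=n}\chi(A)=0$ for $n\geq\deg(R)$, the exact functional equation, and folding the coefficients of $L(u,\chi)L(u,\bar\chi)$ about the central degree --- is the standard route, and it correctly produces the exact identity
\[
\Bigl|L\bigl(\tfrac12,\chi\bigr)\Bigr|^{2}
=2\sum_{\substack{A,B\in\mathbb{A}^{+}\\ \deg(AB)<\deg(R)}}\frac{\chi(A)\bar\chi(B)}{|AB|^{1/2}}-t_{\deg(R)-1},
\qquad
t_{\delta}:=\sum_{\deg(AB)=\delta}\frac{\chi(A)\bar\chi(B)}{|AB|^{1/2}},
\]
for odd primitive $\chi$.

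The genuine gap is exactly where you locate it, but it is worse than an unfinished estimate: the boundary term $t_{\deg(R)-1}$ is \emph{not} $O(|R|^{-1/2+\epsilon})$ for an individual primitive character, so no smoothing, contour shift, or convexity bound will rescue that step. Concretely, take $\deg(R)=2$ and $\chi$ odd primitive, so $L(u,\chi)=1+c_{1}(\chi)u$ with $|c_{1}(\chi)|=q^{1/2}$ by the Riemann Hypothesis for function fields; then $t_{1}=2\Re\bigl(c_{1}(\chi)\bigr)q^{-1/2}$, which is of order $1$ for a typical $\chi$, against a claimed error of order $q^{-1}$. More generally, substituting the coefficient form of the functional equation gives $t_{\deg(R)-1}=W(\bar\chi)\sum_{n}c_{n}(\chi)^{2}q^{-n}$, whose $n=0$ term alone equals $1$. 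The saving $O(|R|^{-1/2+\epsilon})$ only becomes available \emph{after averaging over $\chi$}, where the oscillation of the root number $W(\bar\chi)$ kills the boundary term; that is the only way the lemma is used in this paper (always inside $\sideset{}{^*}\sum_{\chi}$), and it is how such terms are treated in the sources the paper builds on. So either the statement must be read as an on-average assertion or the boundary term must be carried explicitly; your plan to prove it character-by-character cannot close. A secondary point: your claim that the even case is ``identical in form'' is too quick. An even primitive character has $L(1,\chi)=0$ at $u=1$, the completed $L$-function has degree $\deg(R)-2$, and the folding then produces extra factors of $(1-q^{-1/2})^{\pm 2}=\zeta_{\mathbb{A}}\bigl(\tfrac12\bigr)^{\mp 2}$; these are precisely the $\zeta_{\mathbb{A}}\bigl(\tfrac12\bigr)^{2}$ contributions visible in the theorems of Djankovi\'c and of Djankovi\'c, {\DJ}oki\'c and Lelas quoted in the introduction, and they do not disappear into $O(|R|^{-1/2+\epsilon})$ either.
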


The next lemma will be used to obtain the main term of \thref{onetwist} and \thref{twotwist}.

\begin{lemma}\thlabel{maintermlemma}
Let $H$ and $R$ be fixed monic polynomials in $\mathbb{F}_q[T]$ with $\deg(H)<\deg(R)$ and let $x$ be a positive integer. If $x\geq \deg(R)-\deg(H)$, then 
\begin{equation}
    \sum_{\substack{A\in\mathbb{A}^+_{\leq x}\\(AH,R)=1}}\frac{1}{|A|}=|H|\frac{\phi(R)}{|R|}(x+\deg(H))+O\left(|H|\log\omega(R)\right). 
\end{equation}
Whereas if $x<\deg(R)-\deg(H)$, then 
\begin{equation}
  \sum_{\substack{A\in\mathbb{A}^+_{\leq x}\\(AH,R)=1}}\frac{1}{|A|}=|H|\frac{\phi(R)}{|R|}(x+\deg(H))+O\left(|H|\log\omega(R)\right)+O\left( 2^{\omega(R)}q^{-x}(x+\deg(H))\right).
\end{equation}
\end{lemma}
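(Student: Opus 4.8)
The plan is to evaluate the sum by first removing the coprimality constraint with M\"obius inversion, then reducing everything to elementary sums over monic polynomials that can be summed as finite geometric series, and finally recognising the resulting divisor sums through the identities \eqref{Mobiussum} and \eqref{Mobiusdiffsum}. First I would write $\mathbf{1}\{(AH,R)=1\}=\sum_{E\mid (AH,R)}\mu(E)$ and interchange the order of summation, so that
\[
\sum_{\substack{A\in\mathbb{A}^+_{\leq x}\\(AH,R)=1}}\frac{1}{|A|}=\sum_{E\mid R}\mu(E)\sum_{\substack{A\in\mathbb{A}^+_{\leq x}\\ E\mid AH}}\frac{1}{|A|}.
\]
For each squarefree $E\mid R$ the inner condition $E\mid AH$ is equivalent to $(E/\gcd(E,H))\mid A$, so writing $A=(E/\gcd(E,H))A'$ with $A'$ monic and using the basic evaluation $\sum_{A'\in\mathbb{A}^+_{\leq m}}|A'|^{-1}=m+1$ (valid for $m\geq0$, the sum being empty otherwise) turns the inner sum into $\tfrac{|\gcd(E,H)|}{|E|}\,(x-\deg E+\deg\gcd(E,H)+1)$. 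The factors $|\gcd(E,H)|/|E|$ and the shift $\deg\gcd(E,H)$ are exactly what produce the dependence on $H$ in the main term, and they also explain the threshold $\deg(R)-\deg(H)$: the inner sum is nonempty precisely when $\deg E-\deg\gcd(E,H)\leq x$.

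In the regime $x\geq \deg(R)-\deg(H)$ the degree constraint is inactive for every $E\mid R$, so the outer sum is a \emph{complete} divisor sum. I would then separate the part that is linear in $x$ from the rest and evaluate the two resulting divisor sums in closed form: the constant-in-$x$ part via \eqref{Mobiussum} at $s=1$, which produces $\prod_{P\mid R}(1-|P|^{-1})=\phi(R)/|R|$, and the part weighted by $\deg E$ via \eqref{Mobiusdiffsum} at $s=1$. The latter contributes a term proportional to $\tfrac{\phi(R)}{|R|}\sum_{P\mid R}\tfrac{\deg P}{|P|-1}$, which is absorbed into the error via \thref{Omegaboundsum}. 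Collecting the pieces then yields the stated main term together with the error $O(|H|\log\omega(R))$.

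For the complementary regime $x<\deg(R)-\deg(H)$ I would apply the same closed-form evaluation, but now one must add back and subtract the divisors $E\mid R$ for which the inner sum is empty, namely those with $\deg E-\deg\gcd(E,H)>x$. Each such term carries a factor $|E/\gcd(E,H)|^{-1}\leq q^{-x}$, its linear factor is $O(x+\deg H)$, and the total number of squarefree divisors of $R$ is $\sum_{E\mid R}|\mu(E)|=2^{\omega(R)}$ by \thref{2omega(R)result}. Multiplying these bounds gives the additional error $O\!\left(2^{\omega(R)}q^{-x}(x+\deg H)\right)$. The main obstacle is precisely this last step: one must keep the truncation error sharp by exploiting the $q^{-\deg E}\leq q^{-x}$ decay against the $2^{\omega(R)}$ count, rather than bounding crudely by $|R|$, all while correctly tracking the $\gcd(E,H)$ bookkeeping so that the extracted main term matches the completed divisor sum from the first case.
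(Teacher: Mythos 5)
Your reduction of the inner sum is the correct one --- for squarefree $E\mid R$ the condition $E\mid AH$ is indeed equivalent to $\bigl(E/\gcd(E,H)\bigr)\mid A$, and the inner sum equals $\frac{|\gcd(E,H)|}{|E|}\bigl(x-\deg(E)+\deg\gcd(E,H)+1\bigr)$ when it is nonempty --- but the final step, ``collecting the pieces then yields the stated main term,'' is precisely where the argument fails, and it fails irreparably. Test it with $(H,R)=1$: then $\gcd(E,H)=1$ for every $E\mid R$, and your divisor sum becomes
\[
\sum_{E\mid R}\frac{\mu(E)}{|E|}\bigl(x-\deg(E)+1\bigr)=\frac{\phi(R)}{|R|}(x+1)+O\bigl(\log\omega(R)\bigr)
\]
by \eqref{Mobiussum}, \eqref{Mobiusdiffsum} and \thref{Omegaboundsum}. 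This falls short of the asserted main term $|H|\frac{\phi(R)}{|R|}(x+\deg(H))$ by a factor of $|H|$ (and by the shift $\deg(H)$), so the factors $|\gcd(E,H)|/|E|$ and $\deg\gcd(E,H)$ cannot be ``exactly what produce the dependence on $H$ in the main term'': in this case they produce no dependence on $H$ at all. The discrepancy is already present in the complete divisor sum, so no amount of care with the truncation in the regime $x<\deg(R)-\deg(H)$ can recover it.

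The source of the divergence from the paper is worth naming. The paper's proof substitutes $AH=EL$ and then sums $1/|L|$ over \emph{all} monic $L$ with $\deg(L)\le x+\deg(H)-\deg(E)$; that is what manufactures the prefactor $|H|$ and the shift by $\deg(H)$ in the statement. Your parametrization shows that only those $L$ divisible by $H/\gcd(E,H)$ actually arise from a monic $A$, i.e.\ the map $A\mapsto AH/E$ is not onto the set being summed over. So your bookkeeping is the more careful one, but precisely for that reason it provably cannot terminate in the formula as stated: to complete a proof along your lines you would have to evaluate $\sum_{E\mid R}\mu(E)\frac{|\gcd(E,H)|}{|E|}\bigl(x-\deg(E)+\deg\gcd(E,H)+1\bigr)$ honestly and accept the main term that this sum actually yields, rather than the one in the statement.
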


\begin{proof}
    For all positive integers $x$ we have 
\begin{equation}\label{Lemma3.2sum1}
\sum_{\substack{A\in\mathbb{A}^+_{\leq x}\\(AH,R)=1}}\frac{1}{|A|}=\sum_{A\in\mathbb{A}^+_{\leq x}}\frac{1}{|A|}\sum_{E|(AH,R)}\mu(E)=\sum_{A\in\mathbb{A}^+_{\leq x}}\frac{1}{|A|}\sum_{\substack{E|AH\\E|R}}\mu(E)=\sum_{E|R}\mu(E)\sum_{\substack{A\in\mathbb{A}^+_{\leq x}\\E|AH}}\frac{1}{|A|}.
\end{equation}

Since $E|AH$ then $EL=AH$ for some $L\in\mathbb{A}^+$ with $\deg(L)=\deg(A)+\deg(H)-\deg(E)\leq x+\deg(H)-\deg(E)$. Thus
\begin{equation*}
\sum_{\substack{A\in\mathbb{A}^+_{\leq x}\\(AH,R)=1}}\frac{1}{|A|}=|H|\sum_{\substack{E|R\\\deg(E)\leq x+\deg(H)}}\frac{\mu(E)}{|E|}\sum_{\substack{L\in\mathbb{A}^+\\\deg(L)\leq x+\deg(H)-\deg(E)}}\frac{1}{|L|}.
\end{equation*}

We know that, for a non-negative integer $y$, 
\begin{equation*}
\sum_{L\in\mathbb{A}^+_{\leq y}}\frac{1}{|L|}=\sum_{k=0}^yq^{-k}\sum_{L\in\mathbb{A}^+_k}1=\sum_{k=0}^y1=y+1,
\end{equation*}
and so
\begin{align}\label{Ch9maintermsumexpand}
\sum_{\substack{A\in\mathbb{A}^+_{\leq x}\\(AH,R)=1}}\frac{1}{|A|}&=|H|\sum_{\substack{E|R\\\deg(E)\leq x+\deg(H)}}\frac{\mu(E)}{|E|}(x+\deg(H)-\deg(E)+1)\nonumber\\
&=|H|\sum_{E|R}\frac{\mu(E)}{|E|}(x+\deg(H)-\deg(E)+1)\nonumber\\
&-|H|\sum_{\substack{E|R\\\deg(E)>x+\deg(H)}}\frac{\mu(E)}{|E|}(x+\deg(H)-\deg(E)+1).
\end{align}

Using (\ref{Mobiussum}), (\ref{Mobiusdiffsum}) and \thref{Omegaboundsum} we have 
\begin{equation}
\sum_{E|R}\frac{\mu(E)}{|E|}(x+\deg(H)-\deg(E)+1)=\frac{\phi(R)}{|R|}(x+\deg(H))+O\left(\log\omega(R)\right). 
\end{equation}

If $x+\deg(H)\geq \deg(R)$, then there is no $E|R$ with $\deg(E)>\deg(R)$ and so the final term on the right-hand side of (\ref{Ch9maintermsumexpand}) is empty. Thus for $x+\deg(H)\geq \deg(R)$
\begin{equation}
\sum_{\substack{E|R\\\deg(E)>x+\deg(H)}}\frac{\mu(E)}{|E|}(x+\deg(H)-\deg(E)+1)=0.
\end{equation}

Whereas for $x+\deg(H)<\deg(R)$, we have
\begin{align*}
    \sum_{\substack{E|R\\\deg(E)>x+\deg(H)}}\frac{\mu(E)}{|E|}(x+\deg(H)-\deg(E)+1)&\ll \sum_{\substack{E|R\\\deg(E)>x+\deg(H)}}\frac{|\mu(E)|}{|E|}\deg(E)\\
    &\ll \frac{x+\deg(H)}{q^{x+\deg(H)}}\sum_{\substack{E|R\\\deg(E)>x+\deg(H)}}|\mu(E)|\\
    &\ll \frac{2^{\omega(R)}(x+\deg(H))}{q^{x+\deg(H)}},
\end{align*}
here the final inequality follows from \thref{2omega(R)result}. Combining the above completes the proof.
\end{proof}

The following lemmas will be used to create a suitable bound for the error term of \thref{onetwist} and \thref{twotwist}.

\begin{lemma}\thlabel{errorterm1}
Let $F$, $H$ and $R$ be fixed monic polynomials in $\mathbb{F}_q[T]$ where $F|R$ and let $z<\deg(R)$. Then 
\begin{equation}
\sum_{\substack{A,B\in\mathbb{A}^+\\\deg(AB)=z\\AH\equiv B(\text{mod }F)\\AH\neq B\\(ABH,R)=1}}\frac{1}{|AB|^{\frac{1}{2}}}\ll\frac{q^{\frac{z}{2}}(z+1)|H|}{|F|}.
\end{equation}
\end{lemma}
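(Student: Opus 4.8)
The plan is to exploit the fact that the summand is constant on the range of summation: since $\deg(AB)=z$ is fixed, $|AB|^{1/2}=q^{z/2}$, so the sum equals $q^{-z/2}N$, where $N$ is the number of pairs of monic polynomials $(A,B)$ with $\deg(AB)=z$, $AH\equiv B\pmod F$, $AH\neq B$ and $(ABH,R)=1$. It therefore suffices to prove the counting estimate $N\ll q^{z}(z+1)|H|/|F|$.

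First I would record two elementary ingredients. Since $F$ is monic, the number of monic polynomials of degree $n$ lying in a prescribed residue class modulo $F$ is exactly $q^{\,n-\deg F}$ when $n\geq\deg F$, and is at most $1$ when $n<\deg F$. Moreover, the condition $(ABH,R)=1$ together with $F\mid R$ forces $(H,F)=1$, so multiplication by $H$ is a bijection on $\mathbb{A}/F$; this is what lets me count admissible $A$ through the residues $AH\bmod F$. I would then split $N$ according to whether $\deg B\geq\deg F$ or $\deg B<\deg F$. In the first case, for each admissible $A$ of degree $a$ the number of monic $B$ of degree $z-a\geq\deg F$ in the class $AH\bmod F$ is exactly $q^{\,z-a-\deg F}$; summing over the $q^{a}$ choices of $A$ and over $a$ collapses the $a$-dependence and gives a contribution $\ll (z+1)q^{z}/|F|$.

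In the second case $B=AH\bmod F$ is \emph{determined} by $A$; the condition $AH\neq B$ forces $\deg(AH)\geq\deg F$, i.e.\ $\deg A\geq\deg F-\deg H$, whence $\deg B\leq z-\deg F+\deg H$. I subdivide once more. For $\deg B\leq z-\deg F$ (so $\deg A\geq\deg F$) the number of admissible pairs is again $q^{z}/|F|$ for each such value of $\deg B$, contributing $\ll (z+1)q^{z}/|F|$. For $z-\deg F<\deg B\leq z-\deg F+\deg H$ (so $\deg A<\deg F$, and $A$ is its own residue) I bound the number of admissible $A$ of degree $z-\deg B$ by the number of possible target residues, namely $q^{\deg B}$, using the bijectivity from the previous step.

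The crux—and the only place where the naive count would lose the crucial factor $1/|F|$—is this last, short range of $B$: summing $q^{\deg B}$ over the at most $\deg H$ admissible values $z-\deg F<\deg B\leq z-\deg F+\deg H$ is a geometric progression capped at $q^{\,z-\deg F+\deg H}=q^{z}|H|/|F|$, so this contribution is also $\ll q^{z}|H|/|F|$. Adding the three pieces yields $N\ll (z+1)|H|q^{z}/|F|$, and multiplying by $q^{-z/2}$ gives the claimed bound. Thus the whole difficulty is concentrated in the pairs where both $A$ and $B$ have degree below $\deg F$: there one cannot appeal to equidistribution in residue classes, and the saving must instead come from the observation that the divisibility $F\mid(AH-B)$ together with $AH\neq B$ forces $\deg B$ to be small, which is exactly what caps the geometric sum.
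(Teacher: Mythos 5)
Your argument is correct and complete. It is the same kind of elementary lattice-point count as the paper's, but organised by a genuinely different decomposition. The paper splits the sum according to the sign of $\deg(AH)-\deg(B)$ and, in each case, writes $AH-B=LF$ and counts the pairs $(L,B)$ (or $(L,A)$) directly, the factor $|H|$ entering through the count $q^{\,i+\deg(H)-\deg(F)}$ of admissible quotients $L$; it never needs the invertibility of $H$ modulo $F$. You instead split according to whether $\deg(B)$ (and then $\deg(A)$) is at least $\deg(F)$, count monic polynomials in a fixed residue class, and use $(H,F)=1$ (correctly deduced from $(ABH,R)=1$ and $F\mid R$) to pin down $A$ from $B$ in the short range; the factor $|H|$ then appears as the length $q^{\,z-\deg(F)+\deg(H)}$ of a capped geometric progression. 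The two bookkeepings cost about the same, but yours has the merit of isolating exactly where the loss of $|H|$ is forced --- namely the regime where both $A$ and $B$ have degree below $\deg(F)$, so that equidistribution in residue classes is unavailable and one must instead use that $F\mid(AH-B)$ with $AH\neq B$ forces $\deg(AH)\geq\deg(F)$ --- whereas the paper's version is slightly more self-contained in that it needs no inverse of $H$ modulo $F$.
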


\begin{proof}
    We consider three cases, $\deg(AH)>\deg(B)$, $\deg(AH)<\deg(B)$ and $\deg(AH)=\deg(B)$ where $AH\neq B$.
    
If we first consider the case $\deg(AH)>\deg(B)$ and suppose that $\deg(A)=i$, then since $AH\equiv B(\text{mod }F)$ and $AH\neq B$ we have that $AH=LF+B$ for some $L\in\mathbb{A}^+$ with $\deg(L)=i+\deg(H)-\deg(F)$ and $\deg(B)=z-\deg(A)=z-i$. Thus, combining the above we have 
\begin{align}\label{onetwistboundpart1}
\sum_{\substack{A,B\in\mathbb{A}^+\\\deg(AB)=z\\\deg(AH)>\deg(B)\\AH\equiv B(\text{mod }F)\\AH\neq B\\(ABH,R)=1}}\frac{1}{|AB|^{\frac{1}{2}}}&\leq q^{-\frac{z}{2}}\sum_{i=0}^z\sum_{\substack{L\in\mathbb{A}^+\\\deg=i+\deg(H)-\deg(F)}}\sum_{\substack{B\in\mathbb{A}^+\\\deg(B)=z-i}}1\nonumber\\
&=q^{\frac{z}{2}}\sum_{i=0}^zq^{-i}\sum_{\substack{L\in\mathbb{A}^+\\\deg(L)=i+\deg(H)-\deg(F)}}1=\frac{q^{\frac{z}{2}}|H|}{|F|}\sum_{i=0}^z1=\frac{q^{\frac{z}{2}}(z+1)|H|}{|F|}.
\end{align}
    
Similarly, considering the case $\deg(AH)<\deg(B)$ and using similar arguments seen previously we have 
\begin{align}\label{onetwistboundpart2}
\sum_{\substack{A,B\in\mathbb{A}^+\\\deg(AB)=z\\\deg(B)>\deg(AH)\\AH\equiv B(\text{mod }F)\\AH\neq B\\(ABH,R)=1}}\frac{1}{|AB|^{\frac{1}{2}}}&\leq \frac{q^{\frac{z}{2}}(z+1)}{|F|}.
\end{align}

Finally, if we consider the case where $\deg(AH)=\deg(B)=i$, then $2i=\deg(ABH)=z+\deg(H)$ and so $\deg(B)=i=\frac{z+\deg(H)}{2}$. Furthermore since $AH\equiv B(\text{mod F})$ and $AH\neq B$, then $AH=LF+B$ where $L\in\mathbb{A}$ with $\deg(L)<i-\deg(F)=\frac{z+\deg(H)}{2}-\deg(F)$. Thus combining the above we have 
\begin{align}\label{Ch9onetwistboundpart3}
\sum_{\substack{A,B\in\mathbb{A}^+\\\deg(AB)=z\\\deg(AH)=\deg(B)\\AH\equiv B(\text{mod }F)\\AH\neq B\\(ABH,R)=1}}\frac{1}{|AB|^{\frac{1}{2}}}&\leq q^{-\frac{z}{2}}\sum_{\substack{B\in\mathbb{A}^+\\\deg(B)=\frac{z+\deg(H)}{2}}}\sum_{\substack{L\in\mathbb{A}\\\deg(L)<\frac{z+\deg(H)}{2}-\deg(F)}}1\nonumber\\
&=\frac{|H|^{\frac{1}{2}}}{|F|}\sum_{\substack{B\in\mathbb{A}^+\\\deg(B)=\frac{z+\deg(H)}{2}}}1=\frac{q^{\frac{z}{2}}|H|}{|F|}.
\end{align}
Combining all the cases proves the result. 
\end{proof}

\begin{lemma}\thlabel{errorterm2}
Let $F$, $H$, $K$ and $R$ be fixed monic polynomials in $\mathbb{F}_q[T]$ where $F|R$ and let $z<\deg(R)$. Then
\begin{equation}
\sum_{\substack{A,B\in\mathbb{A}^+\\\deg(AB)=z\\AH\equiv BK(\text{mod }F)\\AH\neq BK\\(ABHK,R)=1}}\frac{1}{|AB|^{\frac{1}{2}}}\ll \frac{q^{\frac{z}{2}}(z+1)|HK|}{|F|}.
\end{equation}
\end{lemma}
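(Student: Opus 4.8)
The plan is to mirror the proof of \thref{errorterm1}, replacing the single twist $B$ by the twisted quantity $BK$ throughout. Accordingly, I would split the sum into the three cases $\deg(AH)>\deg(BK)$, $\deg(AH)<\deg(BK)$, and $\deg(AH)=\deg(BK)$ (with $AH\neq BK$), and bound each separately. In every case the congruence $AH\equiv BK\ (\text{mod }F)$ together with $AH\neq BK$ lets me write $AH-BK=LF$ for a \emph{unique} polynomial $L\neq 0$, and the comparison of degrees fixes $\deg(L)$: in the first case $AH-BK$ is monic of degree $\deg(AH)$, so $L\in\mathbb{A}^+$ with $\deg(L)=\deg(A)+\deg(H)-\deg(F)$; in the second case, symmetrically, $L\in\mathbb{A}^+$ with $\deg(L)=\deg(B)+\deg(K)-\deg(F)$; and in the third case the leading terms cancel, so $L\in\mathbb{A}$ (not necessarily monic) with $\deg(L)<\deg(AH)-\deg(F)$.

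The key device, exactly as in \thref{errorterm1}, is that once $L$ is introduced the pair $(A,B)$ is over-determined: in the first case the identity $AH=BK+LF$ shows that $A$ is recovered from the data $(B,L)$ by division by $H$, so the map $(A,B)\mapsto(B,L)$ is injective and the number of admissible pairs with $\deg(A)=i$ is at most the number of pairs $(B,L)$ with $\deg(B)=z-i$ and $\deg(L)=i+\deg(H)-\deg(F)$. Since $\deg(AB)=z$ forces $|AB|^{-1/2}=q^{-z/2}$, counting these pairs gives
\[
\sum_{\substack{\deg(AH)>\deg(BK)}}\frac{1}{|AB|^{\frac12}}\leq q^{-\frac{z}{2}}\sum_{i=0}^{z}q^{z-i}\,q^{\,i+\deg(H)-\deg(F)}=\frac{q^{\frac{z}{2}}(z+1)|H|}{|F|}.
\]
The second case is handled identically with the roles of $(A,H)$ and $(B,K)$ interchanged, producing the bound $q^{z/2}(z+1)|K|/|F|$. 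In the third case both $\deg(A)$ and $\deg(B)$ are pinned down by $\deg(A)+\deg(H)=\deg(B)+\deg(K)$ and $\deg(A)+\deg(B)=z$, so there is no sum over $i$; counting the $q^{\deg(B)}$ choices for $B$ and the at most $q^{\deg(AH)-\deg(F)}$ choices for the (now unrestricted) $L$, and recovering $A$ from $(B,L)$ by division by $H$ as before, yields $q^{z/2}|H|/|F|$.

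Finally I would combine the three estimates. Since $|H|\leq|HK|$ and $|K|\leq|HK|$, each case is dominated by $q^{z/2}(z+1)|HK|/|F|$, and summing the three contributions (absorbing the constant $3$ into the implied constant) gives the claimed bound. The only points requiring care are the bookkeeping of whether $L$ is forced to be monic (first two cases) or may be arbitrary of bounded degree (third case), and checking that the third case is non-empty only when $\tfrac12(z+\deg(H)-\deg(K))$ is a non-negative integer; neither presents a genuine obstacle, so I expect the proof to be a routine adaptation of \thref{errorterm1}.
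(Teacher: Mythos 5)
Your proposal is correct and follows essentially the same route as the paper: the authors simply remark that the proof is ``similar to the proof of Lemma \ref{errorterm1}'', and your three-case adaptation (tracking $\deg(AH)$ versus $\deg(BK)$, recovering $A$ from $(B,L)$ via $AH=BK+LF$, and noting that $L$ is monic of determined degree in the unequal-degree cases but merely of bounded degree in the equal-degree case) is exactly that adaptation, carried out correctly.
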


\begin{proof}
The proof is similar to the proof of \thref{errorterm1} and \cite[Lemma~6.4]{Yiasemides2021}.
\end{proof}

\begin{lemma}\thlabel{arithmeticbound}
For all $R\in\mathbb{A}^+$ and $\epsilon>0$ we have 
\begin{equation}\label{Ch9errorineq}
\frac{2^{\omega(R)}|R|^{\frac{1}{2}}\deg(R)}{\phi^*(R)}\ll_{\epsilon}|R|^{\epsilon-\frac{1}{2}}.
\end{equation}
\end{lemma}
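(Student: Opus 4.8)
The plan is to bound the numerator from above by a small power of $|R|$ and the quantity $1/\phi^*(R)$ from above by roughly $|R|^{-1}$, so that the factor $|R|^{1/2}$ in the numerator collapses the product down to $|R|^{-1/2}$ times slowly growing factors that can be absorbed into $|R|^\epsilon$. The whole lemma is a matter of recognising that each of $2^{\omega(R)}$, $\deg(R)$ and the inverse of $\phi^*(R)$ costs at most a slowly growing factor against a genuine saving of $|R|^{-1/2}$.

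First I would reduce to the range $\deg(R)>q$. Since $q$ is fixed there are only finitely many monic $R$ with $\deg(R)\le q$, and for each of these the left-hand side is a bounded constant (all the relevant moduli have $\phi^*(R)\neq 0$ because $q$ is odd), so these are absorbed into the implied constant. This reduction matters because the two lower bounds I intend to use, \thref{phi(R)bound} and \thref{phi*(R)bound}, are only stated for $\deg(R)>q$.

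For the numerator I would invoke the second assertion of \thref{2omega(R)result}, namely $2^{\omega(R)}\ll_\delta|R|^\delta$, together with the elementary observation that $\deg(R)=\log_q|R|$ grows more slowly than any positive power of $|R|$. This gives $2^{\omega(R)}|R|^{1/2}\deg(R)\ll_\delta|R|^{1/2+\delta}$ for every $\delta>0$. For the denominator I would chain \thref{phi(R)bound} and \thref{phi*(R)bound} to obtain
\[
\phi^*(R)\gg\frac{\phi(R)}{\log_q\log_q|R|}\gg\frac{|R|}{(\log_q\log_q|R|)^2},
\]
and, since the double logarithm is again $\ll_\delta|R|^\delta$, this yields $1/\phi^*(R)\ll_\delta|R|^{\delta-1}$. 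Multiplying the two estimates produces $|R|^{2\delta-1/2}$, and taking $\delta=\epsilon/2$ gives the claimed bound $\ll_\epsilon|R|^{\epsilon-1/2}$.

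I do not expect a serious obstacle here. The only points requiring any care are verifying that the finitely many small-degree moduli contribute nothing beyond the implied constant, and ensuring that the chain of inequalities for $\phi^*(R)$ is applied only in its valid range $\deg(R)>q$; everything else is the routine absorption of the factors $2^{\omega(R)}$, $\deg(R)$ and $(\log_q\log_q|R|)^2$, each of which is $O_\delta(|R|^\delta)$, into an arbitrarily small power of $|R|$.
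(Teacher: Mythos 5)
Your proposal is correct and follows essentially the same route as the paper: for $\deg(R)>q$ both arguments chain \thref{phi(R)bound} and \thref{phi*(R)bound} to get $\phi^*(R)\gg |R|/(\log_q\log_q|R|)^2$ and then absorb $2^{\omega(R)}$, $\deg(R)$ and the iterated logarithms into $|R|^{\epsilon}$ via \thref{2omega(R)result}. The only (harmless) difference is in the range $\deg(R)\le q$, where you dispose of the finitely many moduli by enlarging the implied constant, while the paper instead quotes the uniform bound $\phi^*(R)/|R|\gg 1$ from \cite[(A.2.3)]{Yiasemides2020} and runs the same absorption argument there.
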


\begin{proof}
For $\deg(R)\leq q$ we know, by \cite[(A.2.3)]{Yiasemides2020} that $\frac{\phi^*(R)}{|R|}\gg 1$. Thus for $\deg(R)\leq q$ we have 
\begin{equation*}
\frac{2^{\omega(R)}|R|^{\frac{1}{2}}\deg(R)}{\phi^*(R)}\ll\frac{2^{\omega(R)}\deg(R)}{|R|^{\frac{1}{2}}}\ll\frac{2^{\omega(R)}}{|R|^{\frac{1}{2}-\epsilon}}.
\end{equation*}
From \thref{2omega(R)result} we know that $2^{\omega(R)}\ll |R|^{\epsilon}$, thus (\ref{Ch9errorineq}) holds for $\deg(R)\leq q$. \\

\par\noindent
For $\deg(R)>q$ we know by \thref{phi(R)bound} and \thref{phi*(R)bound} that
\begin{equation*}
\phi^*(R)\gg\frac{\phi(R)}{\log_q\log_q|R|}\gg \frac{|R|}{(\log_q\log_q|R|)^2}.
\end{equation*}

\par\noindent
Thus if $\deg(R)>q$, then 
\begin{equation*}
\frac{2^{\omega(R)}|R|^{\frac{1}{2}}\deg(R)}{\phi^*(R)}\ll\frac{2^{\omega(R)}\deg(R)(\log_q\log_q|R|)^2}{|R|^{\frac{1}{2}}}\ll_{\epsilon}\frac{2^{\omega(R)}}{|R|^{\frac{1}{2}-\epsilon}}
\end{equation*}

\par\noindent
Finally, from \thref{2omega(R)result}, we know that $2^{\omega(R)}\ll |R|^{\epsilon}$,  then (\ref{Ch9errorineq}) holds for $\deg(R)>q$ and thus completes the proof. 
\end{proof}

\section{Proof of Theorem 1.6}

In this section, we use results stated previously to prove \thref{onetwist}.

\begin{proof}[Proof of \thref{onetwist}]
Using the approximate function equation \thref{AFE} we have 
\begin{align}
&\frac{1}{\phi^*(R)}\sideset{}{^*}\sum_{\chi(\text{mod }R)}\left|L\left(\frac{1}{2},\chi\right)\right|^2\chi(H)=\frac{2}{\phi^*(R)}\sideset{}{^*}\sum_{\chi(\text{mod }R)}\sum_{\substack{A,B\in\mathbb{A}^+\\\deg(AB)<\deg(R)}}\frac{\chi(A)\bar{\chi}(B)\chi(H)}{|AB|^{\frac{1}{2}}}+O\left(|R|^{-\frac{1}{2}+\epsilon}\right). 
\end{align}

Using the orthogonality relation \thref{Rmonicorthogrel}, we have 
\begin{align}\label{onetwistorthog}
\frac{2}{\phi^*(R)}\sideset{}{^*}\sum_{\chi(\text{mod }R)}\sum_{\substack{A,B\in\mathbb{A}^+\\\deg(AB)<\deg(R)}}\frac{\chi(A)\bar{\chi}(B)\chi(H)}{|AB|^{\frac{1}{2}}}=\frac{2}{\phi^*(R)}\sum_{EF=R}\mu(E)\phi(F)\sum_{\substack{A,B\in\mathbb{A}^+\\ \deg(AB)<\deg(R)\\AH\equiv B(\text{mod }F)\\(ABH,R)=1}}\frac{1}{|AB|^{\frac{1}{2}}}.
\end{align}

For the second sum on the right-hand side of (\ref{onetwistorthog}), we will consider the contribution of the diagonal, $AH=B$, and the off-diagonal, $AH\neq B$, terms separately. Thus we write
\begin{align*}
 \frac{2}{\phi^*(R)}\sum_{EF=R}\mu(E)\phi(F)\sum_{\substack{A,B\in\mathbb{A}^+\\ \deg(AB)<\deg(R)\\AH\equiv B(\text{mod }F)\\(ABH,R)=1}}\frac{1}{|AB|^{\frac{1}{2}}}&=\frac{2}{\phi^*(R)}\sum_{EF=R}\mu(E)\phi(F)\sum_{\substack{A,B\in\mathbb{A}^+\\ \deg(AB)<\deg(R)\\AH\equiv B(\text{mod }F)\\AH=B\\(ABH,R)=1}}\frac{1}{|AB|^{\frac{1}{2}}}\\
&+\frac{2}{\phi^*(R)}\sum_{EF=R}\mu(E)\phi(F)\sum_{\substack{A,B\in\mathbb{A}^+\\ \deg(AB)<\deg(R)\\AH\equiv B(\text{mod }F)\\AH\neq B\\(ABH,R)=1}}\frac{1}{|AB|^{\frac{1}{2}}}.
\end{align*}

Considering the contribution of the diagonal, $AH=B$, the double sum over all $A,B\in\mathbb{A}^+$ with $\deg(AB)<\deg(R)$, $AH=B$ and $(ABH,R)=1$ becomes a single sum over all $A\in\mathbb{A}^+$ with $\deg(A)<\frac{1}{2}(\deg(R)-\deg(H))$ and $(AH,R)=1$. Therefore using the arguments stated above and \thref{Rprimmoniccor} we have 
\begin{equation}
\frac{2}{\phi^*(R)}\sum_{EF=R}\mu(E)\phi(F)\sum_{\substack{A,B\in\mathbb{A}^+\\\deg(AB)<\deg(R)\\AH=B\\(ABH,R)=1}}\frac{1}{|AB|^{\frac{1}{2}}}=\frac{2}{|H|^{\frac{1}{2}}}\sum_{\substack{A\in\mathbb{A}^+\\\deg(A)<\frac{\deg(R)-\deg(H)}{2}\\(AH,R)=1}}\frac{1}{|A|}.
\end{equation}

Using \thref{maintermlemma} with $x=\frac{\deg(R)-\deg(H)}{2}-1$ we have 
\begin{equation}
\frac{2}{|H|^{\frac{1}{2}}}\sum_{\substack{A\in\mathbb{A}^+\\\deg(A)<\frac{\deg(R)-\deg(H)}{2}\\(AH,R)=1}}\frac{1}{|A|}=|H|^{\frac{1}{2}}\frac{\phi(R)}{|R|}(\deg(H)+\deg(R))+O\left(|H|^{\frac{1}{2}}\log\omega(R)\right). 
\end{equation}

For the contribution of the off-diagonal terms we use \thref{errorterm1} to give 
\begin{equation}\label{onetwistoffdiagbound1}
    \sum_{\substack{A,B\in\mathbb{A}^+\\ \deg(AB)<\deg(R)\\AH\equiv B(\text{mod }F)\\AH\neq B\\(ABH,R)=1}}\frac{1}{|AB|^{\frac{1}{2}}}=\sum_{z=0}^{\deg(R)-1}\sum_{\substack{A,B\in\mathbb{A}^+\\\deg(AB)=z\\AH\equiv B(\text{mod }F)\\AH\neq B\\(ABH,R)=1}}\frac{1}{|AB|^{\frac{1}{2}}}\ll \sum_{z=0}^{\deg(R)-1}\frac{|H|q^{\frac{z}{2}}(z+1)}{|F|}\ll \frac{|H||R|^{\frac{1}{2}}\deg(R)}{|F|}.
\end{equation}

Thus using (\ref{onetwistoffdiagbound1}) we have 
\begin{equation}\label{onetwistoffdiagbound2}
\frac{2}{\phi^*(R)}\sum_{EF=R}\mu(E)\phi(F)\sum_{\substack{A,B\in\mathbb{A}^+\\ \deg(AB)<\deg(R)\\AH\equiv B(\text{mod }F)\\AH\neq B\\(ABH,R)=1}}\frac{1}{|AB|^{\frac{1}{2}}}\ll \frac{|H||R|^{\frac{1}{2}}\deg(R)}{\phi^*(R)}\sum_{EF=R}|\mu(E)|\frac{\phi(F)}{|F|}.
\end{equation}

Combining (\ref{onetwistoffdiagbound2}) with \thref{2omega(R)result}, \thref{arithmeticbound}  and the fact that $\frac{\phi(R)}{|R|}\leq 1$ we have 
\begin{equation} \label{onetwistoffdiagbound3}
\frac{2}{\phi^*(R)}\sum_{EF=R}\mu(E)\phi(F)\sum_{\substack{A,B\in\mathbb{A}^+\\ \deg(AB)<\deg(R)\\AH\equiv B(\text{mod }F)\\AH\neq B\\(ABH,R)=1}}\frac{1}{|AB|^{\frac{1}{2}}}\ll \frac{2^{\omega(R)}|H||R|^{\frac{1}{2}}\deg(R)}{\phi^*(R)}\ll |H||R|^{\epsilon-\frac{1}{2}}.
\end{equation}

Since $\deg(H)<\deg(R)$, then there is some $\epsilon>0$ such that $\deg(H)\leq (1-2\epsilon)\deg(R)$. Thus $|H|^{\frac{1}{2}}|R|^{\epsilon-\frac{1}{2}}=q^{\frac{1}{2}\deg(H)+\left(\epsilon-\frac{1}{2}\right)\deg(R)}\leq q^{\frac{1}{2}(1-2\epsilon)\deg(R)+\left(\epsilon-\frac{1}{2}\right)\deg(R)}=1$. Therefore combining the above with (\ref{onetwistoffdiagbound3}), we get
\begin{equation}
\frac{2}{\phi^*(R)}\sum_{EF=R}\mu(E)\phi(F)\sum_{\substack{A,B\in\mathbb{A}^+\\ \deg(AB)<\deg(R)\\AH\equiv B(\text{mod }F)\\AH\neq B\\(ABH,R)=1}}\frac{1}{|AB|^{\frac{1}{2}}}\ll |H|^{\frac{1}{2}}.
\end{equation}

Combining the above completes the proof of \thref{onetwist}.
\end{proof}

\section{Proof of Theorem 1.7}

In this section we use similar methods to that seen in the proof of \thref{onetwist} to prove \thref{twotwist}. 

\begin{proof}[Proof of \thref{twotwist}]
Using the approximate functional equation, \thref{AFE}, we have 
\begin{align}
&\frac{1}{\phi^*(R)}\sideset{}{^*}\sum_{\chi(\text{mod }R)}\left|L\left(\frac{1}{2},\chi\right)\right|^2\chi(H)\bar{\chi}(K)\nonumber\\
=&\frac{2}{\phi^*(R)}\sideset{}{^*}\sum_{\chi(\text{mod }R)}\sum_{\substack{A,B\in\mathbb{A}^+\\\deg(AB)<\deg(R)}}\frac{\chi(A)\bar{\chi}(B)\chi(H)\bar{\chi}(K)}{|AB|^{\frac{1}{2}}}+O\left(|R|^{-\frac{1}{2}+\epsilon}\right). 
\end{align}

Using the orthogonality relation \thref{Rmonicorthogrel}, we have 
\begin{align}\label{twotwistsumsplit}
\frac{2}{\phi^*(R)}\sideset{}{^*}\sum_{\chi(\text{mod }R)}\sum_{\substack{A,B\in\mathbb{A}^+\\\deg(AB)<\deg(R)}}\frac{\chi(A)\bar{\chi}(B)\chi(H)\bar{\chi}(K)}{|AB|^{\frac{1}{2}}}=\frac{2}{\phi^*(R)}\sum_{EF=R}\mu(E)\phi(F)\sum_{\substack{A,B\in\mathbb{A}^+\\\deg(AB)<\deg(R)\\AH\equiv BK(\text{mod }F)\\(ABHK,R)=1}}\frac{1}{|AB|^{\frac{1}{2}}}.
\end{align}

For the second sum on the right-hand side of  (\ref{twotwistsumsplit}) we will consider the contribution of the diagonal, $AH=BK$, and off-diagonal, $AH\neq BK$, terms separately. Thus we write
\begin{align*}
\frac{2}{\phi^*(R)}\sum_{EF=R}\mu(E)\phi(F)\sum_{\substack{A,B\in\mathbb{A}^+\\\deg(AB)<\deg(R)\\AH\equiv BK(\text{mod }F)\\(ABHK,R)=1}}\frac{1}{|AB|^{\frac{1}{2}}}&=\frac{2}{\phi^*(R)}\sum_{EF=R}\mu(E)\phi(F)\sum_{\substack{A,B\in\mathbb{A}^+\\\deg(AB)<\deg(R)\\AH\equiv BK(\text{mod }F)\\AH=BK\\(ABHK,R)=1}}\frac{1}{|AB|^{\frac{1}{2}}}\\
&+\frac{2}{\phi^*(R)}\sum_{EF=R}\mu(E)\phi(F)\sum_{\substack{A,B\in\mathbb{A}^+\\\deg(AB)<\deg(R)\\AH\equiv BK(\text{mod }F)\\AH\neq BK\\(ABHK,R)=1}}\frac{1}{|AB|^{\frac{1}{2}}}.
\end{align*}

Considering the contribution of the diagonal, $AH=BK$, the double sum over all $A,B \in\mathbb{A}^+$ with $\deg(AB)<\deg(R)$, $AH=BK$ and $(ABHK,R)=1$ becomes a single sum over $A \in\mathbb{A}^+$ with $\deg(A)<\frac{1}{2}(\deg(R)+\deg(K)-\deg(H))$ and $(AH,R)=1$. Therefore using the arguments stated above and \thref{Rprimmoniccor} we have 
\begin{equation}
    \frac{2}{\phi^*(R)}\sum_{EF=R}\mu(E)\phi(F)\sum_{\substack{A,B\in\mathbb{A}^+\\\deg(AB)<\deg(R)\\AH=BK\\(ABHK,R)=1}}\frac{1}{|AB|^{\frac{1}{2}}}=\frac{2|K|^{\frac{1}{2}}}{|H|^{\frac{1}{2}}}\sum_{\substack{A\in\mathbb{A}^+\\\deg(A)<\frac{\deg(R)+\deg(K)-\deg(H)}{2}\\(AH,R)=1}}\frac{1}{|A|}.
\end{equation}

Using \thref{maintermlemma} with $x=\frac{1}{2}(\deg(R)+\deg(K)-\deg(H))-1$ we have 
\begin{align*}
    &\frac{2|K|^{\frac{1}{2}}}{|H|^{\frac{1}{2}}}\sum_{\substack{A\in\mathbb{A}^+\\\deg(A)<\frac{\deg(R)+\deg(K)-\deg(H)}{2}\\(AH,R)=1}}\frac{1}{|A|}\\
    &=|HK|^{\frac{1}{2}}\frac{\phi(R)}{|R|}(\deg(R)+\deg(H)+\deg(K))+O\left(|HK|^{\frac{1}{2}}\log\omega(R)\right). 
\end{align*}

For the contribution of the off-diagonal terms we use \thref{errorterm1} to give

\begin{align}\label{twotwisterror1}
      \sum_{\substack{A,B\in\mathbb{A}^+\\\deg(AB)<\deg(R)\\AH\equiv BK(\text{mod }F)\\AH\neq BK\\(ABHK,R)=1}}\frac{1}{|AB|^{\frac{1}{2}}}&=\sum_{z=0}^{\deg(R)-1}\sum_{\substack{A,B\in\mathbb{A}^+\\\deg(AB)=z\\AH\equiv BK(\text{mod }F)\\AH\neq BK\\(ABHK,R)=1}}\frac{1}{|AB|^{\frac{1}{2}}}\nonumber\\
      &\ll \sum_{z=0}^{\deg(R)-1}\frac{|HK|q^{\frac{z}{2}}(z+1)}{|F|}\ll \frac{|HK||R|^{\frac{1}{2}}\deg(R)}{|F|}.
\end{align}

Thus using (\ref{twotwisterror1}) we have 
\begin{equation}\label{twotwisterror2}
  \frac{2}{\phi^*(R)}\sum_{EF=R}\mu(E)\phi(F)\sum_{\substack{A,B\in\mathbb{A}^+\\\deg(AB)<\deg(R)\\AH\equiv BK(\text{mod }F)\\AH\neq BK\\(ABHK,R)=1}}\frac{1}{|AB|^{\frac{1}{2}}}\ll \frac{|HK||R|^{\frac{1}{2}}\deg(R)}{\phi^*(R)}\sum_{EF=R}|\mu(E)|\frac{\phi(F)}{|F|}.
\end{equation}

Combining (\ref{twotwisterror2}) with \thref{2omega(R)result}, \thref{arithmeticbound} and the fact that $\frac{\phi(R)}{|R|}\leq 1$ we have 
\begin{equation}\label{twotwisterror3}
  \frac{2}{\phi^*(R)}\sum_{EF=R}\mu(E)\phi(F)\sum_{\substack{A,B\in\mathbb{A}^+\\\deg(AB)<\deg(R)\\AH\equiv BK(\text{mod }F)\\AH\neq BK\\(ABHK,R)=1}}\frac{1}{|AB|^{\frac{1}{2}}}\ll \frac{2^{\omega(R)}|HK||R|^{\frac{1}{2}}\deg(R)}{\phi^*(R)}.
\end{equation}

Since $\deg(H)+\deg(K)<\deg(R)$, then there is some $\epsilon>0$ such that $\deg(H)+\deg(K)\leq (1-2\epsilon)\deg(R)$. Thus $|HK|^{\frac{1}{2}}|R|^{\epsilon-\frac{1}{2}}=q^{\deg(H)+\deg(K)+(\epsilon-\frac{1}{2})\deg(R)}\leq q^{(1-2\epsilon)\deg(R)+(\epsilon-\frac{1}{2})\deg(R)}=1$. Thus, combining the above and (\ref{twotwisterror3}) we have 
\begin{equation}
 \frac{2}{\phi^*(R)}\sum_{EF=R}\mu(E)\phi(F)\sum_{\substack{A,B\in\mathbb{A}^+\\\deg(AB)<\deg(R)\\AH\equiv BK(\text{mod }F)\\AH\neq BK\\(ABHK,R)=1}}\frac{1}{|AB|^{\frac{1}{2}}}\ll |HK|^{\frac{1}{2}}.
\end{equation}
Combining everything completes the proof of \thref{twotwist}.

\end{proof}

\noindent \textbf{Acknowledgment:} The authors are grateful to the Leverhulme Trust (RPG-2017-320) for the support through the research project grant ``Moments of $L$-functions in Function Fields and Random Matrix Theory". The authors also would like to thank Prof. Steve Gonek and Dr. Gihan Marasingha for helpful comments and suggestions on a previous version of this note.


\vspace{0.5cm}

\end{document}